\theoremstyle{plain}
\newtheorem{lemma}{Lemma}
\newtheorem{theorem}{Theorem}
\theoremstyle{definition}
\newtheorem{remark}{Remark}
\newtheorem{definition}{Definition}
\newtheorem{example}{Example}
\renewcommand\L{\mathcal{L}}
\newcommand\R{\mathbb{R}}
\newcommand\M{\mathbb{M}}
\newcommand\bfe{\ensuremath{\mathbf{e}}}
\newcommand\Range[1]{{\ensuremath\mbox{Range}(#1)}}
\begin{document}

\title{Numerical approximation of conditionally invariant measures via Maximum Entropy}
\markboth{Bose/Murray}{MAXENT for ACCIMs}
\author{Christopher Bose\footnote{University of Victoria, Mathematics and Statistics, PO BOX 3060 STN CSC, Victoria, BC, Canada V8W3R4.  Email: {\sf cbose@uvic.ca}} ~and
Rua Murray\footnote{University of Canterbury, Department of Mathematics
and Statistics, Private Bag 4800, Christchurch 8140, New Zealand. Email: {\sf rua.murray@canterbury.ac.nz}}}
 
\maketitle

\begin{abstract}
It is well known that open dynamical systems can admit an uncountable
number of (absolutely continuous) conditionally invariant measures (ACCIMs)
for each prescribed escape rate. We propose and illustrate a
convex optimisation based selection scheme (essentially maximum entropy) for
gaining numerical access to some of these measures. The work is similar
to the Maximum Entropy (MAXENT) approach for calculating absolutely continuous invariant 
measures of nonsingular dynamical systems, but contains some 
interesting new twists, including: (i) the natural
escape rate is not known in advance, which can destroy convex structure
in the problem; (ii) exploitation of convex duality to solve
each approximation step induces important (but dynamically relevant and
not at first apparent) localisation of support; (iii) significant potential
for application to the approximation of other dynamically
interesting objects (for example, invariant manifolds).
\end{abstract}

\section{Introduction}

Classical dynamical systems concerns the existence and stability
of invariant sets under the action of a transformation $T:X\to X$. Depending
on the setting, $X$ may be a measure space, a topological space (with or without
a metric structure), a differentiable manifold, a Banach space, and so on.
In each case, orbits defined by iterative application of~$T$ remain
in $X$. For an {\bf open dynamical system\/}, $T$ is defined only on a subset
$A\subsetneq X$, and there are $x\in A$ for which $T(x)\notin A$. Such $x$ 
are said to {\em escape\/}.

Open dynamical systems may be studied in their own right (the paper of 
Demers and Young~\cite{DY06} gives a summary of important questions),
or may be used to study metastable states in closed dynamical systems.
In the latter case, a subset $A\subset X$ is {\em metastable\/} if 
$T(A)\setminus A$ is in some sense {\em small relative to $A$\/}.
Work on making this precise dates at least to 1979, when 
Pianigiani~\&~Yorke~\cite{PY79} introduced {\bf conditionally invariant 
measures\/} (see Section~\ref{sec:2} below) and used them to study 
metastability in expanding interval maps\footnote{The motivation 
in~\cite[p353]{PY79} went beyond interval maps, including preturbulent 
phenomena in the now famous Lorenz equations, and metastable structures in 
atmospheric and other fluid flows and complex systems.}. More recently, 
Homburg~and~Young~\cite{HY02} made productive 
use of conditionally invariant measures to analyse intermittent behaviour 
near saddle-node and boundary crisis bifurcations in unimodal families. 
Many authors have continued to obtain results connecting escape rates and 
metastable behaviour of closed systems; see, for example, 
\cite{AB10,BV12,DW12,FMS11,GTHW11,KL09}.

One of the interesting challenges is to find 
conditionally invariant measures which model the escape statistics 
of orbits distibuted according to some ``natural'' initial measure~$m$
on $A$. In closed dynamical systems there may exist a unique ergodic 
invariant measure~$\mu$ which is absolutely continuous (AC) with respect 
to $m$. Via Birkhoff's ergodic theorem, such $\mu$ describe the orbit
distibution of large\footnote{In the sense of positive $m$-measure.} sets
of initial conditions. By contrast, an open system may support uncountably
many AC conditionally invariant measures (ACCIMs)~\cite[Theorem~3.1]{DY06}, 
so ascribing dynamical 
significance on the basis of absolutely continuity alone does not make
sense. Recently, progress has been made in a variety of settings,
identifying ACCIMs whose densities arise as eigenfunctions of certain
quasicompact {\em conditional transfer operators\/} acting on suitable 
Banach spaces. Such ACCIMs may be considered ``natural'' (see~\cite{DY06} 
for discussion), giving a well-defined {\em escape rate\/} from $A$.  
See, for example,  
\cite{BDM10} for dynamics on Markov towers; \cite{Dem05a,Dem05b} for
interval maps modelled by Young towers; \cite{CMS94,CMS97} for 
expanding circle maps and subshifts of finite type; 
\cite{LiM03} for interval maps with BV potentials. Extending these 
techniques to higher-dimensional settings such as billiards and 
Lorentz gas is an area of much current
interest~\cite{demers13}.

This chapter develops a new class of
computational methods for the explicit
approximation of conditionally invariant 
probability measures on~$A$. Our ideas use
convex optimisation:  the criteria for conditional
invariance are expressed as a sequence of {\em moment
conditions\/} over $L^1$ (integration against a suitable
set of $L^\infty$ test functions), and the principle of maximum
entropy (MAXENT) is used to select (convergent) sequences
of {\em approximately conditionally invariant measures\/}.
The entropy maximisation is solved via standard convex duality
techniques, although attainment in the dual problem may
necessitate a non-obvious (but dynamically meaningful) reduction 
of the domain on which the maximisation is done. The required 
steps are achievable for piecewise constant test functions
(similar in spirit to Ulam's method~\cite{FD03} but with a completely
different mathematical foundation). The chapter is structured as
follows: first, we introduce notation for our study of open systems
and formulate the ACCIM problem (and its uncountable multiplicity of 
solutions) via {\em conditional transfer operators\/}; next, the MAXENT 
problem is set up and analysed; the Ulam-style test functions are 
introduced in Section~\ref{sec:three}, and the domain reduction and 
some numerical examples are given to illustrate the method; we finish 
with some concluding remarks.

\subsection{Nonsingular open dynamical systems}
\label{sec:1}

Let $(X,m)$ be a measure space. 
We consider the dynamics generated by a transformation on a 
{\bf subset of $X$ which fails to be forward invariant\/}; 
such a dynamical system is called {\bf open\/} and may or 
may not support any recurrent behaviour. Let $A\subsetneq X$ be 
measurable and let $T:A\rightarrow X$ be a measurable
transformation where
\begin{itemize}
\item $H_0:=T(A)\setminus A$ is a measurable subset of $X$ (called the {\em hole\/}); and
\item $m(A\cap T^{-1}H_0)>0$; and
\item $m(E)>0$ whenever $m(T^{-1}E)>0$ and $E$ is a measurable subset of $X$; and
\item $T$ is locally finite-to-one (for each $x\in A$, 
$T^{-1}(x)=\{x_{-1}\in A~:~T(x_{-1})=x\}$ is either empty or finite).
\end{itemize}
 
\begin{definition} Let $m|_A$ denote the restriction of the measure~$m$ to $A$.
We call\footnote{Clearly $m\circ T^{-1}\ll m$ so that 
$T:(A,m|_A)\to (X,m)$ is a nonsingular transformation,
but $T:(A,m|_A)\to (X,m|_A)$ fails to be non-singular, as
$m|_A\circ T^{-1}(H_0) = m(A\cap T^{-1}(H_0))>0$ while $m|_A(H_0)=0$.} 
  $(T,A,m|_A)$ satisfying the above conditions a {
\bf nonsingular open dynamical system\/}.
\end{definition}

Notice that $T(x)$ is defined only for $x\in A$, and the ``hole''~$H_0$
can be used to define a {\em survival time\/} for each $x\in A$:
$$\tau(x) := \left\{ \begin{array}{ll}
n &\mbox{if~} x, T(x),\ldots T^n(x)\in A~\mbox{and}~T^{n+1}(x)\in H_0 \\
\infty & \mbox{if~}T^k(x)\in A\;\forall k\in \mathbb{Z}_+. \end{array}\right.$$
When $\tau(x)=n<\infty$, 
$T^n(x)\in H_1:=A\cap T^{-1}(H_0)$  
and such orbits of $T$ terminate at time $\tau(x)+1$.
Only those $x$ for which $\tau(x)=\infty$ can exhibit recurrent behaviour. 

For all that follows it is convenient to decompose $A$ into invariant and
transient parts. Define:
\begin{itemize}
\item the {\bf $n$~step survivor set\/} as
$$A_n := \{x\in A~:~\tau(x)\geq n\} =
\left\{x~:~x,T(x),\ldots,T^n(x)\in A\right\}=\cap_{k=0}^n T^{-k}A.$$
\item $A_\infty := \cap_{n\geq 0} A_n$
\item $H_n:= A_{n-1}\setminus A_{n}=\{x~:~\tau(x)=n-1\}$ for $1<n<\infty$
\end{itemize}
Notice that if $x\in H_n$ then $T^k(x)\in H_{n-k}$ for $0<k\leq n$. The
orbit of $x$ ``falls into the hole" at time~$n$ (escapes) and is lost to
the system thereafter. As well as escape from $A$, we need to account
for the possibility that backwards orbits may not be defined
($T:A_1\rightarrow A$ may not be {\em onto\/}). Since 
some $x\in A$ may have no preimages in~$A$, 
define the following subsets of~$A$:
\begin{itemize}
\item $K_0:=\{x~:~A \cap T^{-1}x =\emptyset\}$
\item $K_n:=\{x~:~\emptyset\neq(A\cap T^{-n}(x))\subset K_0\}
=\{x~:~\min\{k~:~A\cap T^{-k}x=\emptyset\}=n+1\}$
\item $K_\infty:=\{x_0~:~\mbox{there is no sequence $\{x_{-n}\}_{n=1}^\infty$ 
such that $T(x_{-n})=x_{-(n-1)}$, $n>0$}\}$
\item $H_\infty := \cup_{n>0}(H_n\setminus K_\infty)$
\end{itemize}

Points in $K_\infty$ are `backward transient', while points in 
$H_\infty$ are `forward transient'.
Lemma~\ref{lem:lemma1} contains some facts about the action of $T$ on
the various sets $H_n,K_n$. The reader may easily verify that
\begin{itemize}
\item $A_0=A$ and $T(A_n)\subseteq A_{n-1}$
\item $H_n\cap H_m=\emptyset$ if $n\neq m$, $H_n\subseteq A_{n-1}$ 
and $H_n\cap A_n=\emptyset$
\item $T(H_n)\subseteq H_{n-1}$
\item $A\cap T^{-1}(K_{n})\subseteq \cup_{m<n}K_m$  and
$K_{n+1}\subseteq T(K_n)$
\item  $\cup_{n=0}^\infty K_n\subseteq K_\infty$, and the 
union on the left may be finite or infinite (or even 
the emptyset if $T$ is onto~$A$)
\end{itemize}
Any of the containments above may be strict.
In order to avoid unduly messy formulas, from this point on we will generally assume the range of the map $T^{-1}$ is restricted to $A$.

\renewcommand\theenumi{\alph{enumi}}

\begin{lemma}\label{lem:lemma1} 
Let $(T,A,m|_A)$ be a nonsingular open dynamical system.
If $\Omega := A_\infty\setminus K_\infty$ then $A$ admits the
disjoint decomposition $A=K_\infty\cup \Omega \cup H_\infty$ and
\begin{enumerate}
\item $T^{-1}(\cup_{n\geq 0}K_n)\subseteq \cup_{n\geq 0} K_n \pmod{m|_A}$;
\item $T(\Omega) = \Omega$;
\item $T:(H_n\setminus K_\infty)\to(H_{n-1}\setminus K_\infty)$ is onto and
nonsingular (with respect to the obvious restrictions of $m$);
\item $K_\infty=\cup_{n=0}^\infty K_n$.
\end{enumerate}
\end{lemma}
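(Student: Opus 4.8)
The plan is to build everything from the survival-time partition of $A$ together with the backward-orbit tree structure, treating part (d) as the substantive step and the rest as set-theoretic bookkeeping. First I would establish the decomposition. Since every $x\in A$ has a well-defined survival time $\tau(x)\in\{0,1,2,\dots\}\cup\{\infty\}$, with $H_n=\{\tau=n-1\}$ and $A_\infty=\{\tau=\infty\}$, the set $A$ splits disjointly as $A=A_\infty\cup\bigcup_{n\geq 1}H_n$. Removing $K_\infty$ throughout, and using $\Omega=A_\infty\setminus K_\infty$ and $H_\infty=\bigcup_{n>0}(H_n\setminus K_\infty)$, one gets $\Omega\cup H_\infty=A\setminus K_\infty$; since $K_\infty\subseteq A$ this yields $A=K_\infty\cup\Omega\cup H_\infty$, with disjointness inherited from the survival-time partition and from the fact that $\Omega,H_\infty\subseteq A\setminus K_\infty$. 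Claim (a) then falls out of the already-recorded containment $A\cap T^{-1}(K_n)\subseteq\bigcup_{m<n}K_m$ by taking the union over $n$, the stated ``$\bmod m|_A$'' being a harmless concession to measure-zero ambiguities in the $A$-restricted preimage.

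Next I would handle the two surjectivity statements (b) and (c) together, since both hinge on the characterisation of $K_\infty$ as the set of points with no infinite backward orbit. The forward inclusions are straightforward: if $x\in A_\infty$ then $T(x)\in A_\infty$, and prepending $T(x)$ to an infinite backward orbit of $x$ shows $T(x)\notin K_\infty$, giving $T(\Omega)\subseteq\Omega$; similarly the recorded fact $T(H_n)\subseteq H_{n-1}$ and the same prepending argument give a well-defined map $T:H_n\setminus K_\infty\to H_{n-1}\setminus K_\infty$. For surjectivity, given a target $y\notin K_\infty$ I would select the \emph{first predecessor} $x=y_{-1}$ along a fixed infinite backward orbit of $y$: then $x\in A$, $x$ inherits an infinite backward orbit (so $x\notin K_\infty$), and the survival-time relation $\tau(x)=\tau(T(x))+1$ places $x$ in the correct fibre ($A_\infty$ for (b), $H_n$ for (c)). Nonsingularity of the restricted map in (c) is inherited directly from the ambient hypothesis $m(T^{-1}E)>0\Rightarrow m(E)>0$.

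The main obstacle is (d): the inclusion $\bigcup_n K_n\subseteq K_\infty$ is immediate, but the reverse $K_\infty\subseteq\bigcup_n K_n$ is where the hypotheses genuinely bite. Here I would organise the backward orbits of a fixed $x\in K_\infty$ into a rooted tree whose depth-$k$ nodes are the length-$k$ backward-orbit segments $(x=z_0,z_1,\dots,z_k)$ with $z_i\in A$ and $T(z_i)=z_{i-1}$, children being obtained by appending one further $A$-preimage. The \emph{locally finite-to-one} hypothesis makes this tree finitely branching, and $x\in K_\infty$ says precisely that it has no infinite branch; K\"onig's lemma then forces the tree to be finite, so there is a maximal depth $N$ with $A\cap T^{-N}(x)\neq\emptyset$ and $A\cap T^{-(N+1)}(x)=\emptyset$, that is, $x\in K_N$. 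I expect this K\"onig's-lemma step to be the crux, both because it is the only place the finite-to-one assumption is essential and because some care is needed to phrase the tree in terms of orbit segments so that distinct backward segments passing through coinciding points are not accidentally identified.
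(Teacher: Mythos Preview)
Your argument is correct and matches the paper's for parts (a)--(c) almost verbatim (the paper's treatment of the decomposition and of (b) is actually terser than yours, but the content is the same: pull back along a fixed infinite backward orbit to get surjectivity). The only substantive difference is in (d). You argue directly: for $x\in K_\infty$, the backward-orbit tree is finitely branching (by the locally finite-to-one hypothesis) and has no infinite branch, so K\"onig's lemma forces it to be finite and hence $x\in K_N$ for the maximal depth $N$. The paper instead runs the contrapositive: if $x\notin\bigcup_n K_n$ then $T^{-1}x\neq\emptyset$, and since $T^{-1}x$ is finite it cannot lie entirely in $\bigcup_n K_n$ (else $x\in K_{1+\max N_i}$), so some $x_{-1}\in T^{-1}x$ also lies outside $\bigcup_n K_n$; iterating builds an infinite backward orbit, so $x\notin K_\infty$. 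These are dual packagings of the same idea---the paper's inductive step is precisely the branch-extension step hidden inside the proof of K\"onig's lemma---so neither buys anything the other doesn't, though the paper's version is marginally more self-contained while yours makes the role of the finite-to-one hypothesis more transparent. Your remark about formulating tree nodes as orbit \emph{segments} rather than points is a sound precaution, though in fact unnecessary here: each point has a unique $T$-image, so the point-labelled graph is already a tree.
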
 

\begin{proof} (a) Note that
$T^{-1}K_{n}\subseteq \cup_{m< n}K_m$ (for each $n> 0$)
and $T^{-1}K_0=\emptyset$.\newline 
(b) If 
$x\in\Omega$ then $x\in A_\infty$ so $T^n(x)\in A_\infty$.
Thus $\Omega$ is the set of points whose future orbit is contained
in $A$ and has at least one backwards orbit in $A$.\newline  
(c) Let $x\in H_{n-1}\setminus K_\infty$. 
Then there is a sequence $\{x_{-k}\}_{k=1}^\infty$ 
such that $T(x_{-k})=x_{-(k-1)}$ and $T(x_{-1})=x$. 
Clearly $x_{-1}\in H_n\setminus K_\infty$. 
\newline
(d) First, suppose that
$x\notin \cup_{n\geq 0}K_n$. Then $x\notin K_0$ so $\emptyset\neq T^{-1}x$. 
If $T^{-1}x\subseteq \cup_{n\geq 0} K_n$ then there are 
$N_1,\ldots, N_j$ such that 
$T^{-1}x\subseteq K_{N_1}\cup\cdots \cup K_{N_j}$. 
Putting 
$N=1+\max\{N_1,\ldots, N_j\}$ one has $x\in K_N$, a contradiction.
Thus, there is at least one $x_{-1}\in T^{-1}x$ such that 
$x_{-1}\notin \cup_{n\geq 0}K_n$. The proof is completed by induction.
\end{proof}

\begin{example}\label{eg:1}
Let $X=\R^2$, $A=[0,1]^2$ and $T(x,y)=(2x,1/2y)$. 
Then $H_n=(2^{-n},2^{-(n-1)}]\times[0,1]$,
$A_\infty=\{0\}\times[0,1]$. 
On the other hand, $K_n=[0,1]\times(2^{-(n+1)},2^{-n}]$, so
$K_\infty=[0,1]\times(0,1]$. The ``recurrent set'' 
$A_{\infty}\setminus K_\infty=\{(0,0)\}$ is a fixed point 
(so genuinely recurrent), 
and $A_\infty\cap K_{\infty}=\{0\}\times(0,1]$  is part of the stable 
manifold to $(0,0)$. Notice that $H_\infty=(0,1]\times\{0\}$ 
is part of the unstable manifold to $(0,0)$.
\end{example}


\subsection{Escape, conditionally invariant measures and their supports}
\label{sec:2}

We now make precise the notion of escape rates and establish some important
connections with conditionally invariant measures.

\begin{definition} 
The {\bf escape rate\/} of a probability measure~$m_0$ on $A$ is 
$$\rho_{m_0}:=\lim_{n\to\infty} -\frac{1}{n}\log m_0(A_n)
=\lim_{n\to\infty} -\frac{1}{n}\log m_0\{x~:~\tau(x)\geq n\}$$
(when such a limit exists). The open
system $(T,A,m|_A)$ will satisfy the {\bf escape hypothesis\/} iff
\begin{equation}\label{eqn:escape}
m(A_\infty)=0.
\end{equation}
\end{definition}

Clearly, if there is an escape rate $\rho_m>0$ then~(\ref{eqn:escape}) holds. 

\begin{definition}
A probability measure $\mu$ on $A$ is a {\bf conditionally invariant measure\/} (CIM) iff there is $\alpha\in(0,1)$ such that 
$$\mu(T^{-1}E) = \alpha\,\mu(E)\qquad\forall\mbox{measurable~} E\subseteq A.$$
\end{definition}

Note that if $\mu$ is a CIM then
$$ \mu\{\tau\geq n\} = \mu(A_n) = \mu(A\cap T^{-1}A_{n-1}) 
= \alpha\,\mu(A_{n-1}) = \cdots =  \alpha^n\,\mu(A) = \alpha^n.$$
Thus $\rho_\mu=-\log\alpha$ and
$\mu\{x~:~\tau(x)\geq n\} = \mu(A_n) = e^{-\rho_\mu\,n}$, so that
initial conditions distributed according to $\mu$ display 
geometric escape. Provided $H_\infty\neq \emptyset$, 
Lemma~\ref{lem:lemma1}(c) implies the
existence of at least one backwards semi-orbit~$\{x_{-k}\}_{k\geq 0}$ (with 
$T(x_{-k})=x_{-(k-1)}$). Demers and Young~\cite{DY06} point out that
a CIM can be obtained as $(1-\alpha)\sum_{k=0}^\infty\alpha^k\delta_{x_{-k}}$.
However, such CIMs describe only a single orbit, and it remains an 
interesting challenge to find conditionally invariant measures which model 
the escape statistics of the ``natural'' initial measure~$m|_A$.

The domain decomposition of Lemma~\ref{lem:lemma1} and the 
following Lemma~\ref{lem:decomposition} reveal that 
that $A$ decomposes into three pieces:
\begin{enumerate}
\item[(i)] a backwards transient part $K_\infty$ which cannot support any
CIMs, but includes any local basins of attraction 
(we will later identify numerically certain parts of $K_\infty$ and exclude them
for computational reasons). The intuition behind this fact is that the lack
of preimages of points in $K_\infty$ means there is no way to ``replenish'' mass
which is lost to the hole;\newline
\item[(ii)] an envelope 
$\Omega=A_\infty\setminus K_\infty$ for the ``recurrent'' piece 
which can support invariant measures, but not CIMs; and\newline
\item[(iii)] a transient part $H_\infty$ which is the place
to look for CIMs (and includes any local unstable manifolds).
\end{enumerate}


\begin{lemma} \label{lem:decomposition}
Let $(T,A,m|_A)$ be a nonsingular open dynamical system and let
$\Omega$, $K_\infty$ and  $H_\infty$ be as defined previously. Then
\begin{enumerate}
\item if $\mu$ is an invariant or conditionally invariant measure 
on $A$ then $\mu(K_n)=0$ for all $n$ (and $\mu(K_\infty)=0$);
\item if $\mu$ is an invariant measure then $\mu(H_\infty)=0$;
\item if $\mu$ is a conditionally invariant measure then $\mu(\Omega)=0$.
\end{enumerate}
\end{lemma}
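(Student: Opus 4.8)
The plan is to reduce all three parts to evaluating the $\mu$-measure of the survivor sets $A_n$, exploiting the convention (adopted just before Lemma~\ref{lem:lemma1}) that $T^{-1}$ is restricted to $A$. The crucial identity is $A_n=A\cap T^{-1}A_{n-1}=T^{-1}A_{n-1}$, valid because $A_{n-1}\subseteq A$ and all preimages are taken inside $A$. With this in hand, the defining relation of each type of measure turns $\{\mu(A_n)\}$ into a geometric progression whose limit detects $A_\infty$, and the backward-transient part is handled separately by induction.

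For part~(a) I would argue by induction on $n$ using the containment $A\cap T^{-1}(K_n)\subseteq\cup_{m<n}K_m$ recorded before Lemma~\ref{lem:lemma1}. The base case is $\mu(K_0)=0$: since points of $K_0$ have no preimage in $A$ one has $T^{-1}K_0=\emptyset$, so the defining relation (with escape factor $\alpha\in(0,1]$, taking $\alpha=1$ in the invariant case) gives $\alpha\,\mu(K_0)=\mu(T^{-1}K_0)=0$, whence $\mu(K_0)=0$. Assuming $\mu(K_m)=0$ for all $m<n$, the containment yields $\mu(T^{-1}K_n)\leq\mu(\cup_{m<n}K_m)=0$, and the defining relation again forces $\mu(K_n)=0$. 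Since $K_\infty=\cup_{n\geq 0}K_n$ by Lemma~\ref{lem:lemma1}(d), countable subadditivity gives $\mu(K_\infty)=0$.

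Parts~(b) and~(c) both follow from evaluating $\mu(A_\infty)$, computed as $\lim_n\mu(A_n)$ by continuity of $\mu$ from above (the $A_n$ decrease to $A_\infty$ and $\mu$ is finite). In the invariant case the identity $A_n=T^{-1}A_{n-1}$ gives $\mu(A_n)=\mu(A_{n-1})$, so $\mu(A_n)=\mu(A)$ for every $n$ and hence $\mu(A_\infty)=\mu(A)$; since each $H_n$ consists of points of finite survival time it is disjoint from $A_\infty$, so $H_\infty\subseteq A\setminus A_\infty$ and $\mu(H_\infty)\leq\mu(A\setminus A_\infty)=0$, proving~(b). In the conditionally invariant case the same identity gives $\mu(A_n)=\alpha\,\mu(A_{n-1})=\alpha^n\mu(A)$ (as already noted in the text), so $\mu(A_\infty)=\lim_n\alpha^n\mu(A)=0$ because $\alpha\in(0,1)$; as $\Omega=A_\infty\setminus K_\infty\subseteq A_\infty$, we get $\mu(\Omega)=0$, proving~(c).

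The arguments are short once the set-theoretic identities are in place, so I expect the only real care to lie in the measure-theoretic bookkeeping: verifying $A_n=T^{-1}A_{n-1}$ under the restriction-to-$A$ convention, confirming the inclusions $\Omega\subseteq A_\infty$ and $H_\infty\subseteq A\setminus A_\infty$, and justifying the passage $\mu(A_\infty)=\lim_n\mu(A_n)$. None of these is deep, but the induction in~(a) must be set up so that the inductive hypothesis is exactly what the containment $A\cap T^{-1}(K_n)\subseteq\cup_{m<n}K_m$ consumes.
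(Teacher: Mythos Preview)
Your argument is correct. Part~(a) is essentially the paper's proof unwound as an induction rather than applied in one shot via $T^{-(n+1)}K_n\subseteq T^{-1}K_0=\emptyset$; the content is the same.

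Parts~(b) and~(c) take a genuinely different route. The paper proves~(b) by invoking the Poincar\'e recurrence theorem on each $H_n$ (points of $H_n$ cannot recur, forcing $\mu(H_n)=0$), and proves~(c) by using Lemma~\ref{lem:lemma1}(b) to get $\Omega\subseteq T^{-1}\Omega$ and hence $\mu(\Omega)\leq\alpha\,\mu(\Omega)$. Your approach instead computes $\mu(A_\infty)$ directly from the recursion $\mu(A_n)=\alpha\,\mu(A_{n-1})$ and continuity from above, then reads off both conclusions from the elementary inclusions $H_\infty\subseteq A\setminus A_\infty$ and $\Omega\subseteq A_\infty$. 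Your route is arguably cleaner in this open-system setting: the Poincar\'e argument is a little delicate when $T^k x$ is undefined for $k>\tau(x)+1$, whereas your computation uses only the defining relation and finiteness of~$\mu$. The paper's argument for~(c), on the other hand, is self-contained (it does not need the geometric decay noted earlier in the text) and highlights the forward invariance of~$\Omega$ as the mechanism.
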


\begin{proof} (a) Suppose that $\mu\circ T^{-1}= \alpha\,\mu$ for some 
$\alpha\in(0,1]$. Then
$$\alpha^{n+1}\,\mu(K_n) 
= \mu\circ T^{-(n+1)}(K_n)= \mu\circ T^{-1}(T^{-n}K_n)
\leq \mu(T^{-1}K_0) = \mu(\emptyset)=0.$$
By part~(d) of Lemma~\ref{lem:lemma1}, $\mu(K_\infty)
=\mu(\cup_n K_n)\leq \sum_n\mu(K_n)=0$.\newline 
(b) If $\mu$ is an invariant measure and $\mu(H_n)>0$ then by the 
Poincar\'e recurrence theorem almost every $x\in H_n$ recurs to $H_n$ 
infinitely often. But if $x\in H_n$ then $\{k>n~:~T^kx\in H_n\}=\emptyset$, 
so $\mu(H_n)=0$. It follows that $\mu(\cup H_n)=0$ and hence $\mu(H_\infty)=0$. 
\newline
(c) By Lemma~\ref{lem:lemma1}~(b), 
$\Omega\subseteq T^{-1}(T(\Omega))\subseteq T^{-1}\Omega$ so that
$$\mu(\Omega) \leq \mu\circ T^{-1}(\Omega) 
=\alpha\,\mu(\Omega)<1\,\mu(\Omega).$$
Hence $\mu(\Omega)=0$.
\end{proof}

\noindent {\bf Example~\ref{eg:1} revisited.}
Let $X=\R^2$, $A=[0,1]^2$ and $T(x,y)=(2x,1/2y)$. 
Since $\Omega=\{0\}$,
$K_\infty=[0,1]\times(0,1]$
and $H_\infty=(0,1]\times \{0\}$, the only invariant measure is concentrated on 
the fixed point at $0$ and all CIMs are concentrated on $H_\infty$ (the
unstable manifold to $(0,0)$).

\begin{remark} As suggested already, a discrete variant of the 
set $K_\infty$ arises naturally in the numerical methods 
described below. When $T$ is countable-to-one, it can 
occur that $K_\infty\neq \cup_n K_n=:K'_\infty$, but this 
does not alter the result of Lemma~\ref{lem:decomposition}(a).
\end{remark}

\subsection{Conditional transfer operators and the multiplicity of ACCIMs}

We complete the introduction by characterising CIMs as eigenvectors of
certain conditional transfer operators. This provides a concrete
mathematical setting for the approximation algorithms, and gives a useful 
technical tool for establishing the existence of absolutely continuous CIMs.

For each $k\geq 0$ put $m_k=m|_{A_k}$ (so that $m_0=m|_{A}$). 
Then $T:(A_{k+1},m_{k+1})\to(A_k,m_k)$ is a nonsingular transformation, so 
that $m_{k+1}\circ T^{-1} \ll m_{k}$ and a {\em conditional Frobenius--Perron
operator\/} $\L_k: L^1(A_{k+1};m_{k+1})\to L^1(A_k;m_k)$ can be defined in the
usual manner:
$$\L_kf = \frac{d}{dm_k}([f\,m_{k+1}]\circ T^{-1}).$$
Dual to $\L_k$ is the {\em (conditional) Koopman operator\/} $U_k:L^\infty(A_k;m_k)\to 
L^\infty(A_{k+1};m_{k+1})$ with the action
$$U_k\psi = \psi\circ T.$$
The relation
\begin{equation}\label{eq:dualityk}
\int_{A_k}(\L_k\,\varphi)\,\psi\,dm_k=\int_{A_{k+1}}\varphi\,U_k\psi\,dm_{k+1}
\end{equation}
is automatic
for $\varphi\in L^1(A_{k+1};m_{k+1}), \psi\in L^\infty(A_k;m_k)$. In particular,
for any $\varphi\in L^1(A;m_0)$ and $\psi\in L^\infty(A;m_0)$,
\begin{equation}\label{eqn:submarkov}
\int_{A_0} \L_0(\varphi\,\mathbf{1}_{A_1})\,\psi\,dm 
= \int_{A_1}\varphi\,U_0\psi\,dm.\end{equation}

\begin{lemma}\label{lem:lemma2} 
Let $(T,A,m|_A)$ be a nonsingular open dynamical system and let 
$\mu\ll m$ be a measure such that $\mu(A_0)=1$. Then  
is a CIM with escape rate $-\log\alpha$ if and only if 
$\L_0(\mathbf{1}_{A_1}\frac{d\mu}{dm})=\alpha\,\frac{d\mu}{dm}$.
\end{lemma}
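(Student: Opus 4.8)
The plan is to pass to densities and read the defining relation of a CIM through the sub-Markov duality~(\ref{eqn:submarkov}). Write $f=\frac{d\mu}{dm}\in L^1(A;m_0)$, so that $f\geq 0$ and $\int_A f\,dm=\mu(A_0)=1$. The first observation I need is a support fact: a point lies in $T^{-1}E$ only if its image under $T$ is defined and lands in $A$, so for every measurable $E\subseteq A$ one has $T^{-1}E\subseteq A_1=A\cap T^{-1}A$. Consequently $\mathbf{1}_{T^{-1}E}=\mathbf{1}_{A_1}\,(U_0\mathbf{1}_E)$ as functions on $A$ (with $\mathbf{1}_E\in L^\infty(A_0;m_0)$), and therefore
$$\mu(T^{-1}E)=\int_A \mathbf{1}_{T^{-1}E}\,f\,dm=\int_{A_1} f\,(U_0\mathbf{1}_E)\,dm.$$

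Next I would invoke~(\ref{eqn:submarkov}) with $\varphi=f$ and $\psi=\mathbf{1}_E$ to transfer the Koopman operator across the pairing, obtaining
$$\mu(T^{-1}E)=\int_{A_1} f\,(U_0\mathbf{1}_E)\,dm=\int_{A_0}\L_0(\mathbf{1}_{A_1}f)\,\mathbf{1}_E\,dm=\int_E \L_0(\mathbf{1}_{A_1}f)\,dm.$$
Since $\mu(E)=\int_E f\,dm$ for $E\subseteq A$, the defining relation $\mu(T^{-1}E)=\alpha\,\mu(E)$ becomes literally the statement that $\int_E\L_0(\mathbf{1}_{A_1}f)\,dm=\int_E \alpha f\,dm$ for every measurable $E\subseteq A$.

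The equivalence then closes by the standard fact that two functions in $L^1(A;m_0)$ have equal integrals over every measurable set if and only if they agree $m$-a.e.; applied here it gives $\L_0(\mathbf{1}_{A_1}f)=\alpha f$ exactly when $\mu(T^{-1}E)=\alpha\,\mu(E)$ for all $E$. Both implications are read off the same chain of identities, so neither direction is genuinely harder than the other. To finish, the escape-rate bookkeeping is immediate: with $\alpha\in(0,1)$ the measure $\mu$ is by definition a CIM, and the computation recorded just after the definition of a conditionally invariant measure yields $\mu(A_n)=\alpha^n$, hence $\rho_\mu=-\log\alpha$; conversely the eigenvalue equation fixes the same $\alpha$. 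If one wishes to see $\alpha\leq 1$ directly, integrating $\L_0(\mathbf{1}_{A_1}f)=\alpha f$ against the constant test function $\psi\equiv 1$ gives $\alpha=\int_{A_1}f\,dm=\mu(A_1)\leq\mu(A_0)=1$.

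I expect the only non-mechanical point to be the opening support observation together with careful bookkeeping of domains: $\psi=\mathbf{1}_E$ must be regarded as a test function on $A_0$ while $\varphi=f$ is integrated over $A_1$, and it is precisely the restriction $\mathbf{1}_{A_1}$ --- forced by $T^{-1}E\subseteq A_1$ --- that makes the conditional operator $\L_0(\mathbf{1}_{A_1}\,\cdot\,)$ rather than a naive Frobenius--Perron operator appear. Everything after that is the routine ``integrals against all indicators determine a density'' argument.
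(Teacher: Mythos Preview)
Your proof is correct and follows essentially the same approach as the paper: both use the support observation $T^{-1}E\subseteq A_1$ together with the duality relation~(\ref{eqn:submarkov}) to identify $\mu(T^{-1}E)$ with $\int_E\L_0(\mathbf{1}_{A_1}f)\,dm$, then compare with $\alpha\int_E f\,dm$. Your write-up is in fact more complete than the paper's, which leaves the ``equal integrals over all $E$ implies equality a.e.'' step and the biconditional structure implicit.
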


\begin{proof}
 Let $\varphi=\frac{d\mu}{dm}$. Then for $E\subseteq A_0$,
one has $T^{-1}E\subseteq A_1$ so that, using equation~(\ref{eqn:submarkov})
$$\int_E\L_0(\mathbf{1}_{A_1}\varphi)\,dm_0
=\int_{A_1}\varphi\,U_0\mathbf{1}_E\,dm_1
=\int \varphi\,\mathbf{1}_{T^{-1}E}\,dm = \mu(T^{-1}E).$$
Since $\alpha\int_E\varphi\,dm =\alpha\,\mu(E)=\mu(T^{-1}E)$.
\end{proof}

Lemma~\ref{lem:lemma2} characterises absolutely continuous conditionally
invariant measures (ACCIMs) as those whose density functions 
solve a conditional transfer operator equation: 
$\L_0(\mathbf{1}_{A_1}\varphi)=\alpha\,\varphi$. However, in contrast to 
the typical situation for nonsingular dynamical systems, this equation
may have an uncountable number of solutions for each~$\alpha$ if no additional
regularity is specified; see~\cite[Theorem~3.1]{DY06} and discussion therein. 
We now give a version of this result.

\begin{theorem}\label{th:1}
Let $(T,A,m)$ be a nonsingular open dynamical system. 
If there is $\kappa>0$ such that 
$\L_0\mathbf{1}_{A_1}\geq \kappa\,\mathbf{1}_{H_\infty}$ and $m(H_\infty)>0$ 
then for 
every $\alpha\in(0,1)$ there is a CIM which is AC with respect to $m$ and
has escape rate $-\log\alpha$.
\end{theorem}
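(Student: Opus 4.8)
The plan is to construct the eigen-density of the conditional transfer operator directly, exploiting the ``ladder'' structure of the survivor and escape sets. By Lemma~\ref{lem:lemma2} it suffices to produce a nonnegative $\varphi\in L^1(A;m)$ with $\int\varphi\,dm=1$ and $\L_0(\mathbf{1}_{A_1}\varphi)=\alpha\,\varphi$; the escape rate is then automatically $-\log\alpha$. I would seek $\varphi$ supported on $H_\infty=\cup_{n\ge1}(H_n\setminus K_\infty)$, writing $\varphi=\sum_{n\ge1}\varphi_n$ with each $\varphi_n$ supported on the ``rung'' $H_n\setminus K_\infty$. Because $T(H_n)\subseteq H_{n-1}$, and $H_n\subseteq A_1$ for $n\ge2$ whereas $H_1\cap A_1=\emptyset$, the operator pushes the $n$-th rung down to the $(n-1)$-st: a short computation gives $\mathbf{1}_{A_1}\varphi=\sum_{n\ge2}\varphi_n$ and $\L_0(\mathbf{1}_{A_1}\varphi)=\sum_{n\ge2}\L_0\varphi_n$, so the eigenequation decouples into the rung-by-rung relations $\L_0\varphi_n=\alpha\,\varphi_{n-1}$ on $H_{n-1}\setminus K_\infty$.

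The heart of the construction is therefore to \emph{lift}: given $\varphi_{n-1}\ge0$ on $H_{n-1}\setminus K_\infty$, produce $\varphi_n\ge0$ on $H_n\setminus K_\infty$ solving $\L_0\varphi_n=\alpha\,\varphi_{n-1}$. Here I would use the multiplicative identity $\L_0\big((\psi\circ T)\,\mathbf{1}_{H_n\setminus K_\infty}\big)=\psi\,\L_0\mathbf{1}_{H_n\setminus K_\infty}$, which follows from the duality~(\ref{eq:dualityk}). Setting $w_n:=\L_0\mathbf{1}_{H_n\setminus K_\infty}$ and $\varphi_n:=\alpha\,\mathbf{1}_{H_n\setminus K_\infty}\big((\varphi_{n-1}/w_n)\circ T\big)$ then yields $\L_0\varphi_n=\alpha\,\varphi_{n-1}$ wherever $w_n>0$. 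Lemma~\ref{lem:lemma1}(c) asserts that $T:H_n\setminus K_\infty\to H_{n-1}\setminus K_\infty$ is onto and nonsingular, so every point of $H_{n-1}\setminus K_\infty$ has a preimage on the rung above and hence $w_n>0$ there; the standing hypothesis $\L_0\mathbf{1}_{A_1}\ge\kappa\,\mathbf{1}_{H_\infty}$ supplies the quantitative lower bound needed to make the division by $w_n$ legitimate and to control the lift. Starting from $\varphi_1:=\mathbf{1}_{H_1\setminus K_\infty}$ is possible because $m(H_\infty)>0$ forces $m(H_1\setminus K_\infty)>0$: positive measure propagates down the ladder, since nonsingularity turns $m(H_{n-1}\setminus K_\infty)=0$ into $m(H_n\setminus K_\infty)=0$ via $H_n\setminus K_\infty\subseteq T^{-1}(H_{n-1}\setminus K_\infty)$.

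Convergence then comes for free. Integrating the rung relation and using $\int_{A_0}\L_0 g\,dm=\int_{A_1}g\,dm$ gives $\|\varphi_n\|_1=\alpha\,\|\varphi_{n-1}\|_1$, so $\|\varphi_n\|_1=\alpha^{n-1}\|\varphi_1\|_1$ and $\varphi=\sum_n\varphi_n$ converges in $L^1$ with $\|\varphi\|_1=\|\varphi_1\|_1/(1-\alpha)<\infty$. Since $\L_0$ is an $L^1$-contraction on functions supported in $A_1$, I may interchange it with the sum to confirm $\L_0(\mathbf{1}_{A_1}\varphi)=\alpha\,\varphi$; after normalising, Lemma~\ref{lem:lemma2} delivers an ACCIM with escape rate $-\log\alpha$. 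As $\alpha\in(0,1)$ was arbitrary, every prescribed escape rate is realised, and consistency with Lemma~\ref{lem:decomposition} is automatic because the measure is supported on $H_\infty$ by construction.

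The main obstacle I anticipate is exactly the interaction with the backward-transient set $K_\infty$. The hypothesis bounds the \emph{full} weight $\L_0\mathbf{1}_{A_1}$ from below, but some preimages of a point in $H_{n-1}\setminus K_\infty$ may lie in $H_n\cap K_\infty$ (this occurs precisely when $T$ carries part of $K_\infty$ out of $K_\infty$), so lifting with the full weight would leak support into $K_\infty$, where the weight is uncontrolled and the subsequent lift fails. Replacing it by the restricted weight $w_n=\L_0\mathbf{1}_{H_n\setminus K_\infty}$ keeps every $\varphi_n$ supported on $H_\infty$, but one must then verify carefully that $w_n$ is strictly positive $m$-a.e.\ on $H_{n-1}\setminus K_\infty$ and controlled enough that the composed functions $(\varphi_{n-1}/w_n)\circ T$ remain genuine $L^1$ densities. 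Reconciling the hypothesised bound on the full operator with this restricted weight, through the onto/nonsingular statement of Lemma~\ref{lem:lemma1}(c), is the step demanding the most care.
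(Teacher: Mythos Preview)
Your proposal is correct and follows essentially the same route as the paper: build the density rung-by-rung on $H_k\setminus K_\infty$ via the lift $\varphi_{k+1}\propto\mathbf{1}_{H_{k+1}\setminus K_\infty}\,(\varphi_k/\L_0\mathbf{1}_{H_{k+1}\setminus K_\infty})\circ T$, then sum and normalise. The only differences are cosmetic---the paper preserves mass at each lift and weights the final sum by $\alpha^{k-1}$, whereas you absorb the factor $\alpha$ into each lift---and the $K_\infty$ subtlety you flag is precisely the point the paper handles by its parenthetical assertion that the denominator is bounded below by $\kappa\,\mathbf{1}_{H_{k+1}\setminus K_\infty}$.
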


\begin{proof}
There is at least one $N$ for which $m(H_N\setminus K_\infty)>0$. By an 
inductive application of Lemma~\ref{lem:lemma1}(c), 
$m(H_1\setminus K_\infty)>0$.
Now let $\mu_1\ll m|_{H_1\setminus K_\infty}$ 
be a finite measure and put $\varphi_1 = \frac{d\mu_1}{dm}$. Note that
$\mathbf{1}_{A_1}\,\varphi_1=0$.
Next, we construct (inductively) a sequence
of integrable functions $\varphi_k$, supported on $H_{k}\setminus K_\infty$
such that each $\L_0(\mathbf{1}_{A_{1}}\varphi_{k+1})
=\L_k\varphi_{k+1}=\varphi_k$. Let $\varphi_k\in L^1(H_k\setminus K_\infty;m_k)$
be given. Assume that $\varphi_k$ is bounded (the general case follows from
the bounded case by an approximation argument). On $H_{k+1}\setminus K_\infty$
put
$$\varphi_{k+1}:=
\frac{\varphi_k\circ T}{U_k\L_k\mathbf{1}_{H_{k+1}\setminus K_\infty}}$$
(note that the denominator is bounded below by 
$\kappa\,\mathbf{1}_{H_{k+1}\setminus K_\infty}$).
Let $\mu_{j}=\varphi_{j}\,m_{j}$ for $j=k,k+1$ and 
$E\subseteq H_{k}\setminus K_\infty$. Then
\begin{align*}
\mu_{k+1}\circ T^{-1}E 
&= \int_{H_{k+1}\setminus K_\infty} \varphi_{k+1}\,U_k\mathbf{1}_E\,dm\\
&=\int_{A_{k+1}} U_k(\varphi_k\,\mathbf{1}_E/\L_k\mathbf{1}_{H_{k+1}\setminus K_\infty})\,\mathbf{1}_{H_{k+1}\setminus K_\infty}\,dm
=\int_{A_k}\,\varphi_k\,\mathbf{1}_E\,dm = \mu_k(E).\end{align*}
Thus, $\varphi_k=\frac{d}{dm_k}\mu_k = \frac{d}{dm_k}\mu_{k+1}\circ T^{-1}
=\L_k\frac{d\mu_{k+1}}{dm_{k+1}}=\L_k\varphi_{k+1}$. 
Using $E=H_k\setminus K_\infty$ and Lemma~\ref{lem:lemma1}(c), 
$\int\varphi_k\,dm=\int\varphi_{k+1}\,dm$.
Finally, put 
$\varphi=\frac{1-\alpha}{\mu_1(A_0)}\sum_{k=1}^\infty\alpha^{k-1}\varphi_k$. 
Then, $\int_{A_0}\varphi\,dm=1$ and
$\L_0(\mathbf{1}_{A_1}\varphi)=\alpha\,\varphi$. The theorem follows from
Lemma~\ref{lem:lemma2}.
\end{proof}

\begin{remark}
The proof given above is essentially the one from~\cite{DY06}; the different
conditions are to account for the fact that we have not imposed any
topological (or smoothness) restrictions on $T$. Note that
each choice of finite AC measure on $H_1\setminus K_\infty$ gives a 
different ACCIM.   
\end{remark}

\section{Convex optimisation for the ACCIM problem}

We now describe a selection principle for ACCIMs based on the
Shannon-Boltzmann entropy. The first idea is to encode the criteria
for being a CIM into a sequence of moment conditions, and to
search for {\em approximate\/} CIMs which locally resemble
the measure~$m$. This leads to the optimisation
problems~$(P_{n,\alpha})$, where the entropy maximising density is
sought, subject to meeting the first $n$ moment conditions for 
conditional invariance (with escape rate $-\log\alpha$).
Then, in Section~\ref{sec:convexdual}, we
recall some standard results from convex optimisation which
allow the MAXENT problem~$(P_{n,\alpha})$ to be recast in dual form. 
Theorem~\ref{th:2} identifies a condition which is both necessary and 
sufficient for solvability of the dual problem. 
Section~\ref{sec:domainreduction} introduces a {\em domain reduction\/}
technique which ensures that the conditions of Theorem~\ref{th:2} are met,
revealing an interesting connection between the structure of the moment
conditions and the backwards transient sets $K_\infty$. The main result is Theorem~\ref{th:3}: an explicit formula for the solution of ($P_{n,\alpha}$).

\subsection{Moment formulation of the ACCIM problem}

By Lemma~\ref{lem:lemma2}, if $\mu$ is an ACCIM
and $\varphi=\frac{d\mu}{dm}$ then 
$$\L_0(\mathbf{1}_{A_1}\varphi)= \alpha\,\varphi, 
\qquad \alpha=\int_{A_1}\varphi\,dm=\mu(A_1).$$
This is equivalent to
$$\int_{A_0}\left[\L_0(\mathbf{1}_{A_1}\varphi) - \alpha\,\varphi\right]\,\psi\,dm=0
\qquad \forall\psi\in L^\infty(A;m),
\qquad \int (\mathbf{1}_{A_1}\,\varphi)\,dm = \alpha$$
and hence, using equation~(\ref{eqn:submarkov}), 
$$\int_{A_0}\left[\mathbf{1}_{A_1}\psi\circ T - \alpha\,\psi\right]\varphi\,dm=0
\qquad \forall\psi\in L^\infty(A;m),
\qquad \int_{A_1} \varphi\,dm = \alpha.$$
To obtain a computationally tractable representation of these conditions,
observe that it suffices to verify for all $\psi$ in a weak* dense subset
of $L^\infty(A;m_0)$. 

\begin{definition} Let 
$\{\psi_j\}_{j=1}^\infty\subset L^\infty(A;m_0)$
be a sequence whose span is weak* dense and put $\psi_0=\mathbf{1}_{A}$. Fix $\alpha\in(0,1]$. 
Then
\begin{equation}\label{e.Fn}
\begin{split}
\mathcal{F}_n := \Bigl\{0\leq \varphi\in L^1(A;m_0)~:~ & 
\int_{A_1}\varphi\,dm=\alpha, \int_{A}\varphi\,\psi_0\,dm=1, 
\qquad\mbox{and}\\ 
&\int_{A}\left[\mathbf{1}_{A_1}\psi_j\circ T - \alpha\,\psi_j\right]
\varphi\,dm=0, j=1,\ldots,n\Bigr\}.\end{split}
\end{equation}
are {\bf approximately conditionally invariant
densities\/} with escape rate $-\log\alpha$.\end{definition}

Notice that each $\mathcal{F}_{n+1}\subset \mathcal{F}_n$. If a 
sequence $\{f_n\}$ is chosen such that each $f_n\in\mathcal{F}_n$ and
$f_n\xrightarrow{weak} f_\infty$ then $f_\infty\in\cap_{n>0}\mathcal{F}_n$.
Such an $f_\infty$ is the density of a CIM. Using arguments similar
to those leading up to Theorem~5.2 in~\cite{BM06}, one has weak (and 
indeed $L^1$) convergence of such a sequence when selecting $f_n$ to solve
\begin{equation}\tag{$P_{n,\alpha}$}
\mbox{maximize~} H(f) \quad s.t. \quad f\in\mathcal{F}_n
\end{equation}
where $H$ is a suitably chosen functional. We use the Shannon-Boltzmann
entropy
$$H(f) := - \int_A f(x)\,\log f(x)\,dm(x)$$
(where $t\,\log t$ is set to $0$ when $t=0$ and $\infty$ when $t<0$). 
If $T$ admits an ACCIM $\mu$ for which $H(\frac{d\mu}{dm})>-\infty$,
then each problem ($P_{n,\alpha}$) has a unique solution~$f_n$, and $\lim f_n$ 
exists both weakly and in $L^1$ (proofs can be adapted from~\cite{BM06}).

Each {\em primal problem\/} ($P_{n,\alpha}$) is concave, admitting a 
solution $f_{n,\alpha}$ depending on both $n$ and $\alpha$. 
As we illustrate with numerical
examples (Section~\ref{sec:eg}) the  role of $\alpha$ is interesting,
being a parameter that is tunable to produce a range of escape 
rates\footnote{The flexibility to tune~$\alpha$ without impact on 
numerical effort is reminiscent of the use of Ulam's method to calculate 
the topological pressure of piecewise smooth dynamical systems by 
varying an inverse temperature parameter~\cite{FMT07}.}: 
for $\alpha$ near $0$, escape is rapid (with mass of the ACCIM 
tending to concentrate on the first few preimages of the hole); 
for $\alpha$ near $1$, escape is slow with mass concentrated nearer 
to $\Omega$.  

In order to identify the entropy maximising ACCIM we
propose a nested approach: at the outer level, 
for each fixed~$n$, optimise $H(f_{n,\alpha})$
(over $\alpha$); as an `inner' step, each $f_{n,\alpha}$ 
is computed to solve~($P_{n,\alpha}$).

\begin{remark} The optimisation problem ($P_{n,\alpha}$) can be reformulated 
to remove~$\alpha$ as a variable. One simply replaces the $j$th moment
condition in~(\ref{e.Fn}) with
$$\int_{A_0}\left[\mathbf{1}_{A_1}\psi_j\circ T - 
({\textstyle\int_{A_1}\varphi\,dm})\,\psi_j\right]\varphi\,dm=0$$
for each $\psi_j$. This destroys the linearity of the constraint,
and potentially the convexity of the optimisation problem.
\end{remark}

\subsection{Convex duality for problem ($P_{n,\alpha}$)}
\label{sec:convexdual}

Problems like $(P_{n,\alpha})$ are never solved directly. Instead, a `Lagrange multipliers'
approach converts the problem to an equivalent finite-dimensional unconstrained 
optimisation. For the benefit of readers not familiar with this type of argument, we outline
the steps leading to this `dual formulation'.
Let $n,\alpha$ and $\{\psi_k\}_{k=1}^n$ be fixed. To simplify matters we
assume that the test functions form
a partition of unity over $A$, so $\psi_0=\mathbf{1}_A=\sum_{k=1}^n\psi_k$ 
and 
$$0=\int_{A_0}\left[\mathbf{1}_{A_1} \mathbf{1}_{A_0}\circ T- 
\alpha\,\mathbf{1}_{A_0}\right]\varphi\,dm=
\int_{A_1}\varphi\,dm - \alpha\,\int_{A_0}\varphi\,dm$$
follows from the corresponding conditions for $\psi_1,\ldots,\psi_n$. 
The normalisation $\int_{A_0}\varphi\,dm=1$ is thus a consequence of
$\int_{A_1}\varphi\,dm = \alpha$, so only one of those conditions is needed.

\begin{definition}
Define $\M:L^1(A;m_0)\to\R^{n+1}$ by 
$$(\M\varphi)_0=\int_{A_1}\varphi\,dm\qquad\mbox{and}\quad
(\M\varphi)_j = \int_{A}\left[\mathbf{1}_{A_1}\psi_j\circ T - \alpha\,\psi_j\right]
\varphi\,dm$$
for $j=1,\ldots,n$. Let $\M^*:\R^{n+1}\to L^\infty(A;m_0)$ be defined by
$$\M^*\lambda = \lambda_0\,\mathbf{1}_{A_1} 
+ \sum_{j=1}^n\lambda_j(\mathbf{1}_{A_1}\psi_j\circ T-\alpha\,\psi_j).$$
Let $\bfe=[1,0,\ldots,0]^T\in\R^{n+1}$, put 
$Q(\lambda):= \alpha\,\lambda^T\,\bfe - \int_{A}\exp(\M^*\lambda-1)\,dm$ 
and define a {\em dual problem\/}:
\begin{equation}\tag{$D_{n,\alpha}$}
\mbox{maximise}~Q(\lambda)\qquad \mbox{s.t.~}\lambda\in\R^{n+1}.
\end{equation}
\end{definition}

We now outline how $(D_{n,\alpha})$ is related to ($P_{n,\alpha}$). 
First, note that 
$$f\in\mathcal{F}_n \Leftrightarrow \M f=
\alpha\,\bfe\qquad\mbox{and}\qquad \lambda^T(\M f) =
\int_{A}\M^*\lambda\,f\,dm\qquad\forall f\in L^1(A;m).$$
For every $\lambda\in\R^{n+1}$
\begin{align*}
\sup_{f\in\mathcal{F}_n} H(f) &= \sup_{\{f~:~\M f=\alpha\,\bfe\}} H(f)\\
&= \sup_{\{f~:~\M f=\alpha\,\bfe\}} [H(f) + \lambda^T(\M f - \alpha\,\bfe)]\\
&\leq \sup_{f\in L^1(A;m)} [H(f) + \lambda^T(\M f - \alpha\,\bfe)]\\
&= -\alpha\,\lambda^T\,\bfe + \sup_{f\in L^1(A_0;m)} \left[\int_{A}\M^*\lambda\,f\,dm - (-H(f))\right]\\
&=  -\alpha\,\lambda^T\,\bfe + H^*(\M^*\lambda)\\
&= -\alpha\,\lambda^T\,\bfe + \int_{A}\exp(\M^*\lambda - 1)\,dm = -Q(\lambda)\\
\end{align*}
where $H^*$ is the {\em Fenchel conjugate\/} of the convex functional~$-H$, and the second to last
equality is a nontrivial result in convex analysis (see Rockafellar~\cite{R1} and 
Borwein and Lewis~\cite{BL91b}). Observe that $-Q(\lambda)$ is 
an upper bound on $H(f)$ for all $f\in\mathcal{F}_n$ and $\lambda\in\R^{n+1}$ so that
the (negative of) the solution to ($D_{n,\alpha}$) provides an upper bound on the solution to ($P_{n,\alpha}$).
This is called the {\em principle of weak duality\/}. In fact, ($D_{n,\alpha}$) is a differentiable,
unconstrained, concave maximisation problem, and our method involves solving it.


\begin{theorem}[Dual attainment]\label{th:2}  Let $\alpha,n$ be fixed.
\begin{enumerate}
\item $\lambda^*$ solves ($D_{n,\alpha}$) if and only if  
$f_n:=\exp(\M^*\lambda^*-1)\in\mathcal{F}_n$ and $H(f_n)=-Q(\lambda^*)$;
\item the problem ($D_{n,\alpha}$) attains its maximum if and only if
\begin{equation}\label{eq:condition}
0\neq\lambda\in\{\ker{\M^*}\oplus\mbox{span}(\bfe)\}^\perp 
\quad\Rightarrow \quad [\M^*\lambda]^+\neq 0~m\mbox{-a.e.}.
\end{equation}
\end{enumerate}
\end{theorem}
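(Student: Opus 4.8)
The plan is to treat $(D_{n,\alpha})$ as a smooth, unconstrained concave maximisation, read optimality off the gradient for part~(a), and analyse attainment through the recession (asymptotic) behaviour of $Q$ for part~(b). First I would record the derivatives of $Q$. Writing $g_0:=\mathbf{1}_{A_1}$ and $g_j:=\mathbf{1}_{A_1}\psi_j\circ T-\alpha\,\psi_j$ for the components of $\M^*$, so that $\M^*\lambda=\sum_j\lambda_j g_j$, and putting $f:=\exp(\M^*\lambda-1)$, differentiation under the integral (routine, since $\M^*\lambda\in L^\infty$ makes the integrand bounded) gives $\partial_{\lambda_j}Q=\alpha\,\bfe_j-\int_A g_j f\,dm=\alpha\,\bfe_j-(\M f)_j$, i.e. $\nabla Q(\lambda)=\alpha\,\bfe-\M f$. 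The Hessian acts as $d\mapsto-\int_A(\M^*d)^2 f\,dm\le 0$, so $Q$ is concave, and strictly so in any direction $d$ with $\M^*d\not\equiv 0$. Part~(a) is then immediate: $\lambda^*$ solves $(D_{n,\alpha})$ iff $\nabla Q(\lambda^*)=0$, i.e. iff $\M f_n=\alpha\,\bfe$, which is exactly the assertion $f_n\in\mathcal F_n$ (nonnegativity being automatic as $f_n>0$). When this holds, substituting $\log f_n=\M^*\lambda^*-1$ and using $\int_A(\M^*\lambda^*)f_n\,dm=\lambda^{*T}(\M f_n)=\alpha\,\lambda^{*T}\bfe$ yields $H(f_n)=-\alpha\,\lambda^{*T}\bfe+\int_A\exp(\M^*\lambda^*-1)\,dm=-Q(\lambda^*)$; by the weak duality bound this equality simultaneously certifies that $f_n$ solves $(P_{n,\alpha})$.

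For part~(b) I would compute the recession function $Q^\infty(d):=\lim_{t\to\infty}Q(td)/t$. On the cone $C:=\{d:\M^*d\le 0\ m\text{-a.e.}\}=\{d:[\M^*d]^+=0\ m\text{-a.e.}\}$ the penalty $\int_A\exp(t\,\M^*d-1)\,dm$ stays bounded, so $Q^\infty(d)=\alpha\,d^T\bfe$; off $C$ the penalty grows superlinearly and $Q^\infty(d)=-\infty$. The crucial preliminary is that feasibility of $\mathcal F_n$ controls the linear term: fixing $f_0\in\mathcal F_n$ (so $\M f_0=\alpha\,\bfe$, $f_0\ge 0$), any $d\in C$ satisfies $\alpha\,d^T\bfe=d^T(\M f_0)=\int_A(\M^*d)f_0\,dm\le 0$, hence $d^T\bfe\le 0$. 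Thus $Q^\infty\le 0$ everywhere and $Q$ is bounded above with no recession direction of positive slope. The same $f_0$ gives $\alpha\,\bfe\in\mathrm{Range}(\M)=(\ker\M^*)^\perp$, so $\bfe\perp\ker\M^*$; together with $\M^*\bfe=\mathbf{1}_{A_1}\ne 0$ (as $m(A_1)>0$) this makes $\ker\M^*\oplus\operatorname{span}(\bfe)$ a genuine direct sum and identifies the constancy space of $Q$ as exactly $\ker\M^*$, along which $Q$ is constant.

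It then remains to pin down attainment. I would restrict $Q$ to $W:=(\ker\M^*)^\perp$, where $\M^*$ is injective, so $Q|_W$ is strictly concave and bounded above and $\sup_{\R^{n+1}}Q=\sup_W Q$. For such a function the supremum is attained iff it is coercive on $W$, i.e. iff there is no nonzero $d\in W$ with $Q^\infty(d)=0$: if some such $d$ existed, strict concavity along that line would force $t\mapsto Q(\lambda+td)$ to increase strictly toward an unattained asymptote, precluding any maximiser; conversely $Q^\infty(d)<0$ for all $d\in W\setminus\{0\}$ makes the superlevel sets compact. Finally I would match this to~(\ref{eq:condition}): a nonzero $d\in W$ has $Q^\infty(d)=0$ precisely when $d\in C$ and $d^T\bfe=0$, i.e. when $0\ne d\in W\cap\bfe^\perp\cap C$; and since $W\cap\bfe^\perp=(\ker\M^*)^\perp\cap(\operatorname{span}\bfe)^\perp=\{\ker\M^*\oplus\operatorname{span}(\bfe)\}^\perp$, the absence of such $d$ is exactly the implication displayed in~(\ref{eq:condition}).

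The main obstacle is part~(b): correctly separating the two distinct ways attainment can fail — a positive-slope recession direction giving unboundedness, versus a zero-slope ``flat-at-infinity'' direction giving a supremum approached but not attained — and recognising that feasibility of $\mathcal F_n$ automatically excludes the first while forcing $\bfe\perp\ker\M^*$. The remaining bookkeeping, reducing the a priori criterion $C\cap\bfe^\perp\subseteq\ker\M^*$ to the stated perp-cone intersection $\{\ker\M^*\oplus\operatorname{span}(\bfe)\}^\perp\cap C=\{0\}$ via the decomposition $C=\ker\M^*\oplus(C\cap W)$ together with $\ker\M^*\subseteq\bfe^\perp$, is where the care is needed.
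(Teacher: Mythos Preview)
Your argument is correct and, for part~(a), identical to the paper's: both reduce to the first-order condition $\nabla Q(\lambda^*)=\alpha\bfe-\M\exp(\M^*\lambda^*-1)=0$ and then verify $H(f_n)=-Q(\lambda^*)$ by substitution.

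For part~(b) the paper is much terser than you. Its necessity argument is the same computation you do---it exhibits, for a bad $\lambda$ (with $\lambda^T\bfe=0$, $\lambda\perp\ker\M^*$, $\M^*\lambda\le 0$), the explicit inequality
\[
Q(\lambda^\dagger+t\lambda)\ge Q(\lambda^\dagger)+(1-e^{-\kappa t})\int_E\exp(\M^*\lambda^\dagger-1)\,dm>Q(\lambda^\dagger),
\]
which is exactly your ``strictly increasing along the line'' observation written out. For sufficiency the paper simply cites Theorem~3.3 of \cite{BM07}, whereas you supply a self-contained recession-cone argument: establishing $Q^\infty(d)<0$ for every nonzero $d\in W=(\ker\M^*)^\perp$ under condition~(\ref{eq:condition}), and hence compactness of the superlevel sets of $Q|_W$. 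That is a clean, standard way to package the same idea, and it makes transparent the role of feasibility ($f_0\in\mathcal F_n$ forces $\bfe\perp\ker\M^*$ and $d^T\bfe\le 0$ on the cone~$C$), something the paper only remarks on \emph{after} the theorem.

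Two small points. First, your throwaway claim that $Q^\infty\le 0$ implies ``$Q$ is bounded above'' is not what you actually use; the compactness-of-superlevel-sets argument you give later is the real engine and needs no separate boundedness input. Second, in the necessity direction the phrase ``increase strictly toward an unattained asymptote'' is more than you need---strict increase along the ray already rules out any maximiser, regardless of whether the limit is finite.
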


\begin{proof}
(a) This is a standard result in dual optimisation theory, and is a consequence of the 
fact that $\lambda^*$ solves ($D_{n,\alpha}$) iff
$\alpha\,[\bfe]_j-[\M\exp(\M^*\lambda^*-1)]_j 
= \frac{\partial Q}{\partial \lambda_j}|_{\lambda^*}=0$ for $j=0,\ldots,n$. 
\newline
(b) Sufficiency
of~(\ref{eq:condition}) is established by minor modifications to the proof of Theorem~3.3 in~\cite{BM07}. 
For necessity, suppose that $\lambda^T\bfe=0$, 
$0\neq \lambda\in \{\ker{\M^*}\}^\perp$ and $M^*\lambda\leq 0$. 
Then there are $\kappa>0$ and
$E\subseteq A$ such that $m(E)>0$ and 
$M^*\lambda \leq-\kappa\,\mathbf{1}_E$. Then, 
for any $\lambda^\dagger\in\R^{n+1}$ and $t>0$,
$$Q(\lambda^\dagger+t\,\lambda)\geq Q(\lambda^\dagger) 
+ (1-e^{-\kappa t})\int_E\exp(\M^*\lambda^\dagger -1)\,dm > Q(\lambda^\dagger).$$
Hence $Q$ cannot attain its maximum.
\end{proof}

\subsection{Domain reduction and dual optimality conditions}
\label{sec:domainreduction}

The condition~(\ref{eq:condition}) incorporates some important facts about 
ACCIMs. First, by Theorem~\ref{th:1}, there exist ACCIM. It follows from this 
that $\mathcal{F}_n\neq\emptyset$ and 
$\alpha\,\bfe\in\Range{\M}=\{\ker{\M^*}\}^\perp$ 
(this is the reason for separating
out the direction~$\bfe$). Second, the support of each ACCIM must be disjoint
from subsets of~$A$ associated with ``bad functions''. (This is made precise
in Lemma~\ref{lem:ignorebad} below.) A function $\psi$ will be called
a {\em bad function\/} if $\mathbf{1}_{A_1}\,\psi\circ T-\alpha\,\psi\leq 0$
(but not equal to $0$ $m$--a.e.). If $\lambda\in\R^{n+1}$ is such that 
$[\lambda]_0=0$ and $\M^*\lambda\leq 0$ (but nonzero), then 
$\psi=\sum_{j=1}^n[\lambda]_j\,\psi_j$ is a {\em bad function\/}. 
The condition~(\ref{eq:condition}) for solvability of ($D_{n,\alpha}$) 
is equivalent to there being no
bad functions in $\mbox{span}\{\psi_j\}_{j=1}^n$. We are going to show that
bad functions may exist (Example~\ref{eg:bad}), but they are 
irrelevant to the ACCIMs (their supports are disjoint from $H_\infty$; see 
Lemma~\ref{lem:decomposition}(c) and Lemma~\ref{lem:ignorebad}) 
and can be excised from the problems ($P_{n,\alpha}$) and ($D_{n,\alpha}$) 
(Lemma~\ref{lem:reduce}). We call this latter procedure {\em domain reduction\/}.

\begin{example}\label{eg:bad}
If $x\in \cup_{n\geq 0}K_n$ let $N(x):=\min\{k~:~T^{-k}(x)\cap A_0=\emptyset\}$.
Note that $N(x)+1\leq N(T(x))$ (where $N(y)=\infty$ if 
$y\notin \cup_{n\geq 0} K_n$). Define $\psi(x)=(\alpha/2)^{N(x)}$. Then
$-(\alpha/2)\psi=(\alpha/2)\psi - \alpha\psi \geq \psi\circ T-\alpha\psi$. 
Hence 
$\mathbf{1}_{A_1}\psi\circ T - \alpha \psi<0$ on $\cup_{n\geq 0}K_n$.  
\end{example}

\begin{lemma}\label{lem:ignorebad}
Let $\alpha\in(0,1)$ and 
suppose that $\psi\in L^\infty(A;m)$ 
satisfies $\mathbf{1}_{A_1}\psi\circ T\leq \alpha\,\psi$.
Then $\psi|_{\cup_{k>0}H_k}\geq 0$ and $\psi|_{A\setminus K_\infty}\leq 0$.
In particular, $m(H_\infty\cap \mbox{supp}(\psi))=0$.
\end{lemma}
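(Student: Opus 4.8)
The plan is to establish the two one-sided bounds separately and then combine them on $H_\infty$. The lower bound $\psi|_{\cup_{k>0}H_k}\geq 0$ I would prove by forward induction on $k$, while the upper bound $\psi|_{A\setminus K_\infty}\leq 0$ requires a backward-orbit argument exploiting the contraction factor $\alpha<1$ together with boundedness of $\psi$. The final assertion then drops out: since $H_\infty\subseteq\cup_{k>0}H_k$ and $H_\infty\subseteq A\setminus K_\infty$, the two bounds force $\psi=0$ $m$-a.e.\ on $H_\infty$, whence $m(H_\infty\cap\mbox{supp}(\psi))=0$.

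For the lower bound, note that $A\setminus A_1=H_1$, so on $H_1$ the hypothesis reads $0\leq\alpha\,\psi$, giving $\psi\geq 0$ $m$-a.e.\ there (since $\mathbf{1}_{A_1}=0$ and $\alpha>0$). This is the base case. For the inductive step I would use $T(H_k)\subseteq H_{k-1}$ together with $H_k\subseteq A_{k-1}\subseteq A_1$ (for $k\geq 2$): on $H_k$ the hypothesis becomes $\psi\circ T\leq\alpha\,\psi$, i.e.\ $\psi\geq\alpha^{-1}(\psi\circ T)$. If $\psi\geq 0$ $m$-a.e.\ on $H_{k-1}$, then $\{\psi<0\}\cap H_{k-1}$ is $m$-null, and by nonsingularity ($m\circ T^{-1}\ll m$) its $T$-preimage is $m$-null; hence $\psi\circ T\geq 0$ $m$-a.e.\ on $H_k$, giving $\psi\geq 0$ $m$-a.e.\ there. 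Only the ``easy'' direction of nonsingularity is needed here, and the argument handles all of $H_k$ (including its intersection with $K_\infty$).

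The upper bound is the crux. I would argue by backward propagation of positivity, quantified so as to exploit boundedness. Fix $n\geq 1$, $\epsilon>0$ and set $S_n^\epsilon:=\{x\in H_n\setminus K_\infty:\psi(x)>\epsilon\}$. By Lemma~\ref{lem:lemma1}(c), $T:(H_{n+1}\setminus K_\infty)\to(H_n\setminus K_\infty)$ is onto and nonsingular, which I read as the pushforward of $m|_{H_{n+1}\setminus K_\infty}$ being equivalent to $m|_{H_n\setminus K_\infty}$; thus $m(S_n^\epsilon)>0$ implies $m(T^{-1}S_n^\epsilon\cap(H_{n+1}\setminus K_\infty))>0$. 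On that preimage set (contained in $A_1$) the hypothesis gives $\psi\geq\alpha^{-1}(\psi\circ T)>\epsilon/\alpha$, so it lies in $S_{n+1}^{\epsilon/\alpha}$ and $m(S_{n+1}^{\epsilon/\alpha})>0$. Iterating, $m(S_{n+j}^{\epsilon/\alpha^j})>0$ for every $j$; but $\epsilon/\alpha^j\to\infty$, so for large $j$ this set is contained in $\{\psi>\|\psi\|_\infty\}$ and is $m$-null — a contradiction. Hence $m(S_n^\epsilon)=0$ for all $\epsilon>0$, i.e.\ $\psi\leq 0$ $m$-a.e.\ on $H_n\setminus K_\infty$, and therefore on $H_\infty=\cup_{n>0}(H_n\setminus K_\infty)$. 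The same scheme applied to $\Omega$, using $T(\Omega)=\Omega$ from Lemma~\ref{lem:lemma1}(b) in place of Lemma~\ref{lem:lemma1}(c), yields $\psi\leq 0$ $m$-a.e.\ on $\Omega$, completing the bound on $A\setminus K_\infty=\Omega\cup H_\infty$.

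The main obstacle is the measure-theoretic bookkeeping in this backward step: propagating positivity upward requires the ``hard'' direction $m(E)>0\Rightarrow m(T^{-1}E\cap(\cdot))>0$, i.e.\ that the relevant pushforward \emph{dominates} $m$, not merely that it is dominated by $m$. On the survivor layers $H_{n+1}\setminus K_\infty$ this is exactly the content of ``onto and nonsingular'' in Lemma~\ref{lem:lemma1}(c); on $\Omega$ one needs the analogous two-sided nonsingularity of $T|_\Omega$, which I would extract from $T(\Omega)=\Omega$ together with nonsingularity and the fact that every point of $\Omega$ has a backward orbit within $\Omega$. Since only the bound on $H_\infty$ is needed for the stated consequence $m(H_\infty\cap\mbox{supp}(\psi))=0$, the $\Omega$ case may be treated as a routine variant once this nonsingularity is recorded.
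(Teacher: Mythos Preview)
Your strategy is the paper's: forward induction along orbits for $\psi\geq 0$ on $\cup_k H_k$, then a backward-orbit argument exploiting $\alpha<1$ and $\|\psi\|_\infty<\infty$ for $\psi\leq 0$ on $A\setminus K_\infty$. The paper's execution is considerably simpler because it argues pointwise throughout: for $x\in H_k$ one has $T^{k-1}x\in H_1$ and chains $0\leq\psi(T^{k-1}x)\leq\alpha\,\psi(T^{k-2}x)\leq\cdots\leq\alpha^{k-1}\psi(x)$; for $x\notin K_\infty$ one simply picks any backward orbit $\{x_{-k}\}$ (guaranteed by the definition of $K_\infty$) and obtains $\psi(x)\leq\alpha^k\psi(x_{-k})\leq\alpha^k\|\psi\|_\infty\to 0$. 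No level sets, no invocation of the ``hard'' direction of nonsingularity, no separate treatment of $\Omega$ versus the $H_n\setminus K_\infty$ layers.

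Your more careful $m$-a.e.\ route via level sets $S_n^\epsilon$ is sound in spirit, but the obstacle you flag is genuine and is not cleanly discharged by Lemma~\ref{lem:lemma1}(c) as written: ``nonsingular'' there is the standard $m\circ T^{-1}\ll m$, and the proof of that lemma only establishes pointwise ontoness, not that the pushforward dominates $m$ on the target. The paper sidesteps the whole issue by treating the hypothesis as a pointwise inequality on a fixed representative of $\psi$; if you insist on a fully measure-theoretic version you would need to supply the equivalence-of-measures step yourself rather than read it into Lemma~\ref{lem:lemma1}(c).
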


\begin{proof} First, let $x\in H_1$. Then $\mathbf{1}_{A_1}(x)=0$ so 
$0=\mathbf{1}_{A_1}\psi\circ T(x)\leq \alpha\,\psi(x)$, so $\psi|_{H_1}\geq 0$. Now suppose that 
$x\in H_k$. Then $T^{k-1}(x)\in H_{1}$ so that 
$$0\leq \psi(T^{k-1}(x))\leq \alpha\,\psi(T^{k-2}(x)) \leq 
\cdots \leq \alpha^{k-1}\psi(x).$$
Thus, $\psi|_{H_k}\geq 0$. On the other hand, if $x\notin K_\infty$ 
then for each $k>0$ 
there is at least one $x_{-k}$ such that $T^k(x_{-k})=x$. 
Then $\psi(x)=\psi\circ T^{k}(x_{-k})\leq \alpha^k\,\psi(x_{-k})
\leq \alpha^k\|\psi\|_\infty$. Letting $k\rightarrow\infty$, $\psi(x)\leq 0$.
\end{proof}

To apply Theorem~\ref{th:2} when $K_\infty\neq\emptyset$ 
we need to ensure that the chosen test functions
$\{\psi_j\}_{j=1}^n$ are {\em unable to detect\/} bad functions. To do this, we exploit a {\bf basis
specific domain reduction\/}: remove from the domain~$A$ the support of any function
$h=\M^*\lambda$ where $h\leq 0$ and $\lambda\in\Range{\M}/\mbox{span}\{\bfe\}$. Let $\hat{A}$ denote this
reduced domain.

\begin{lemma}\label{lem:reduce}
In the notation of this section, suppose that $\hat{A}$ is measurable and $f\in\mathcal{F}_n$. Then
$f=f\,\mathbf{1}_{\hat{A}}$ $m$--a.e.
\end{lemma}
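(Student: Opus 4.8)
The plan is to exploit the constraint $\M f=\alpha\,\bfe$ satisfied by every $f\in\mathcal{F}_n$ together with the sign information built into the domain reduction. First I would recall the two facts already available: every $f\in\mathcal{F}_n$ is nonnegative and satisfies $\M f=\alpha\,\bfe$, and the pairing identity $\int_A(\M^*\lambda)\,f\,dm=\lambda^T(\M f)$ holds for all $\lambda\in\R^{n+1}$ and all $f\in L^1(A;m)$. I would also unwind the notation defining $\hat A$: the admissible directions $\lambda\in\Range{\M}/\mbox{span}\{\bfe\}$ are precisely those with $[\lambda]_0=0$ (orthogonality to $\bfe$ is the effect of the quotient, while $\lambda\perp\Ker{\M^*}$ is harmless since it leaves $\M^*\lambda$ unchanged), subject to $h:=\M^*\lambda\leq 0$.

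The heart of the argument is a single computation. Fix any admissible $\lambda$, so $[\lambda]_0=0$ and $h=\M^*\lambda\leq 0$ $m$--a.e. Then
$$\int_A h\,f\,dm=\lambda^T(\M f)=\alpha\,\lambda^T\bfe=\alpha\,[\lambda]_0=0.$$
Since $f\geq 0$ and $h\leq 0$, the integrand $hf$ is nonpositive, so a vanishing integral forces $hf=0$ $m$--a.e., whence $f=0$ $m$--a.e. on $\{h<0\}$, which up to $m$--null sets is exactly $\mbox{supp}(h)$. This already gives the claimed vanishing of $f$ on the support of each individual bad function $h$.

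The one genuine obstacle is that $A\setminus\hat A$ is a union over the (in general uncountable) set of admissible $\lambda$, and the $m$--a.e. vanishing must be established over this entire union, not merely for each $\lambda$ separately. I would resolve this by observing that the admissible $\lambda$ form a closed convex cone inside the finite-dimensional space $\{\lambda:[\lambda]_0=0\}$, hence a separable set: choosing a countable dense family $\{\lambda_k\}$ in the unit sphere of this cone and setting $\lambda^\ast=\sum_k 2^{-k}\lambda_k$ produces a single admissible direction (convergence and admissibility follow from the cone structure together with the $L^\infty$-continuity of $\M^*$), with $\{\M^*\lambda^\ast<0\}=\bigcup_k\{\M^*\lambda_k<0\}$ because every summand is nonpositive. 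Uniform convergence of $\M^*\lambda_{k}\to\M^*\lambda$ then shows that this countable union recovers $\bigcup_{\lambda}\{\M^*\lambda<0\}$ modulo $m$--null sets, so $A\setminus\hat A=\{\M^*\lambda^\ast<0\}$ up to null sets. Applying the displayed computation to $\lambda^\ast$ yields $f=0$ $m$--a.e. on all of $A\setminus\hat A$, i.e. $f=f\,\mathbf{1}_{\hat A}$ $m$--a.e. (The assumed measurability of $\hat A$ is precisely what makes $\mathbf{1}_{\hat A}$ a legitimate multiplier in the conclusion.)
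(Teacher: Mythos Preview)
Your argument is correct and shares the same core with the paper's proof: both use the pairing identity $\int_A(\M^*\lambda)\,f\,dm=\lambda^T(\M f)=\alpha\,\lambda^T\bfe=0$ together with $f\geq 0$ and $\M^*\lambda\leq 0$ to force $f=0$ on the support of each bad function. The paper argues by contradiction, simply asserting that if $m(\mbox{supp}(f)\setminus\hat A)>0$ then some single admissible $\lambda$ has $m(\mbox{supp}(\M^*\lambda)\cap\mbox{supp}(f))>0$, and derives $0<0$. You instead proceed directly and explicitly construct one universal $\lambda^*$ (a convergent positive combination of a countable dense set in the finite-dimensional cone of admissible directions) whose bad set equals $A\setminus\hat A$ up to null sets. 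That extra step makes explicit what the paper leaves implicit: it is the finite-dimensionality of the $\lambda$-space that allows the a~priori uncountable union $\bigcup_\lambda\mbox{supp}(\M^*\lambda)$ to be captured by a single direction, so that the pointwise conclusion $f\,\M^*\lambda=0$ really does propagate to all of $A\setminus\hat A$.
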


\begin{proof} Suppose that $m(\mbox{supp}(f)\setminus \hat{A})>0$ and let
$\lambda$ be such that $\lambda^T\bfe=0$, $\M^*\lambda\leq 0$ and 
$\mbox{supp}(\M^*\lambda)\cap\mbox{supp}(f)\subseteq A_0\setminus \hat{A}$ has positive measure. 
Then, $\M f=\alpha\,\bfe$ so that 
$0=\lambda^T(\M f) = \int_{A_0}\M^*\lambda\,f\,dm <0$, an obvious contradiction.
\end{proof}

In view of Lemma~\ref{lem:reduce}, $m$ can be replaced with $\hat{m}=m|_{\hat{A}}$ in the definition of
the problem ($P_{n,\alpha}$) {\em without any change to the set $\mathcal{F}_n$\/}. The value of the
problem is also unchanged, since there is no contribution to $H(f)$ from those places where
$f$ takes the value $0$. The duality theory is now applied to the measure space $(A_0,\hat{m})$,
and the corresponding dual problem is
\begin{equation}\tag{$\hat{D}_{n,\alpha}$}
\mbox{maximise~}\hat{Q}(\lambda):=
\alpha\,\lambda^T\,\bfe - \int_{\hat{A}}\exp(\M^*\lambda-1)\,dm\qquad
\mbox{s.t.~}\lambda\in\R^{n+1}.
\end{equation}
Notice that if $\M^*\lambda\leq 0$ $m$--almost everywhere, then the domain 
reduction ensures that $\M^*\lambda=0$ $\hat{m}$--a.e. Thus, all potentially problematic
$\lambda$ have been pushed into $\ker{\M^*}$ (modulo $\hat{m}$). In particular, condition~(\ref{eq:condition})
is satisfied for the reduced domain. The previous results can be collected in our main
theorem. 

\begin{theorem}\label{th:3}
Let $\alpha,n$ be fixed and suppose that $\hat{A}$ is measurable. Then ($\hat{D}_{n,\alpha}$) attains its maximum
at finite $\lambda^*$ and $f_n=\mathbf{1}_{\hat{A}}\exp(\M^*\lambda^*-1)$ solves ($P_{n,\alpha}$).
\end{theorem}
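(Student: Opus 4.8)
The plan is to combine the three preceding results (Theorem~\ref{th:2}, Lemma~\ref{lem:reduce}, and the domain-reduction construction) into a single clean statement. The key observation to exploit is the last remark before the theorem: after passing to the reduced domain $\hat{A}$, \emph{every} $\lambda$ with $\M^*\lambda\leq 0$ $m$--a.e.\ satisfies $\M^*\lambda=0$ $\hat{m}$--a.e., so all the ``problematic'' directions have been absorbed into $\ker{\M^*}$ (computed now with respect to $\hat{m}$). Thus the natural first step is to verify that the dual attainment criterion~(\ref{eq:condition}) of Theorem~\ref{th:2}(b), reinterpreted on the measure space $(A_0,\hat{m})$, holds automatically after reduction.

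Concretely, I would argue the contrapositive. Suppose~(\ref{eq:condition}) fails on $(A_0,\hat{m})$: there is $0\neq\lambda\in\{\ker\M^*\oplus\mathrm{span}(\bfe)\}^\perp$ with $[\M^*\lambda]^+=0$ $\hat{m}$--a.e., i.e.\ $\M^*\lambda\leq 0$ $\hat{m}$--a.e. Since $\alpha\,\bfe\in\Range\M=\{\ker\M^*\}^\perp$ (established in the discussion following Lemma~\ref{lem:ignorebad} via Theorem~\ref{th:1}), I may assume $\lambda^T\bfe=0$, so $\lambda\in\Range\M/\mathrm{span}(\bfe)$ in the sense used to define $\hat{A}$. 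But the very definition of $\hat A$ removes the support of any such $h=\M^*\lambda\leq 0$; hence $\M^*\lambda=0$ $\hat{m}$--a.e., placing $\lambda$ in $\ker\M^*$ (modulo $\hat m$) and contradicting $\lambda\in\{\ker\M^*\}^\perp\setminus\{0\}$. Therefore~(\ref{eq:condition}) holds, and Theorem~\ref{th:2}(b) gives attainment of $(\hat D_{n,\alpha})$ at some finite $\lambda^*$.

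The second step is to convert this dual solution into the claimed primal solution. By Theorem~\ref{th:2}(a) applied on $(A_0,\hat m)$, the function $f_n:=\exp(\M^*\lambda^*-1)$ (now an element of $L^1(A_0;\hat m)$) lies in $\mathcal{F}_n$ and satisfies $H(f_n)=-\hat Q(\lambda^*)$. Extending $f_n$ by zero off $\hat A$ — that is, writing $f_n=\mathbf{1}_{\hat A}\exp(\M^*\lambda^*-1)$ as a density on all of $A_0$ — leaves the moment integrals $\M f_n$ unchanged, since $\hat m=m|_{\hat A}$ and the excised region contributes nothing; so $f_n\in\mathcal{F}_n$ in the original (unreduced) sense as well. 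The final step is the optimality identification: weak duality (the chain of inequalities preceding Theorem~\ref{th:2}) gives $H(f)\leq -\hat Q(\lambda)$ for every $f\in\mathcal{F}_n$ and every $\lambda$, and the entropy contribution from $\{f=0\}$ vanishes, so the value of $(P_{n,\alpha})$ is unaffected by the domain reduction (as already noted after Lemma~\ref{lem:reduce}). Hence $H(f_n)=-\hat Q(\lambda^*)$ matches the weak-duality upper bound, forcing $f_n$ to solve $(P_{n,\alpha})$.

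The main obstacle I anticipate is \emph{not} any single analytic estimate but rather the bookkeeping of \emph{which measure space each object lives on}: $\M$, $\M^*$, $\ker\M^*$, $\Range\M$, and the conjugate $H^*$ must all be read with respect to $\hat m$ once reduction is invoked, and one must check that $\Range\M$ and $\{\ker\M^*\}^\perp$ still coincide after restriction so that the splitting-off of $\bfe$ remains valid. A secondary subtlety is confirming that $f_n$ extended by zero genuinely lies in $\mathcal{F}_n$ relative to the \emph{original} $m$ and not merely relative to $\hat m$; this is exactly the content of Lemma~\ref{lem:reduce} (which asserts $\mathcal{F}_n$ is unchanged by reduction), so invoking it cleanly is the crux. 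Once those identifications are pinned down, the theorem is essentially a concatenation of Theorem~\ref{th:2} with the reduction lemmas, and no new estimates are required.
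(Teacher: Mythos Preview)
Your proposal is correct and follows essentially the same approach as the paper, which presents Theorem~\ref{th:3} as a direct consequence of the preceding results (Theorem~\ref{th:2} applied on $(A_0,\hat m)$, Lemma~\ref{lem:reduce}, and the observation that domain reduction pushes all bad directions into $\ker\M^*$ modulo $\hat m$) without a separate proof. You are in fact more explicit than the paper about the bookkeeping of which measure space each object lives on and about transferring feasibility and optimality between the reduced and unreduced problems.
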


We note that $\M^*$ may have nontrivial kernel (modulo $\hat{m}$), so the optimising $\lambda^*$ 
can be non-unique. We also make the following observations:
\begin{itemize} 
\item the reduced domain~$\hat{A}$ depends on~$n$, possibly $\alpha$ and may be {\em very difficult to determine for general test functions\/};
\item assuming the escape~hypothesis~(\ref{eqn:escape}) we have 
$A\setminus \hat{A}\subseteq K_\infty \pmod{m}$ [$m(A_\infty)=0$ by~($\ref{eqn:escape}$) which together with Lemma~\ref{lem:ignorebad} shows that $\mbox{supp}(\psi)\subseteq K_\infty\pmod{m}$ for any bad function~$\psi$; the observation follows];
\item if $\hat{A}$ is overestimated then condition~(\ref{eq:condition}) fails and
the dual optimisation problem does not have a solution for finite~$\lambda$.   Nevertheless it would be a simple matter to set up the dual formulation
$(D_{n,\alpha})$ and seek a numerical `solution' of this infeasible optimization problem without first verifying the optimality condition in equation (\ref{eq:condition}); such a numerical approach is bound to be both unstable and misleading. 
See Borwein and Lewis~\cite{BL91b} for further discussion of this and related issues.
\end{itemize}
Nothwithstanding these warnings, in Section~\ref{sec:three} we show how to  compute~$\hat{A}$ for
piecewise constant test functions based on a measurable partition of~$A$.

\section{A MAXENT procedure for approximating ACCIMs}
\label{sec:three}

Under the conditions of Theorem~\ref{th:1} there are many 
ACCIMs for each escape rate. If at least one of these has a
density with finite Shannon-Boltzmann entropy then the solutions
of a sequence of problems ($P_{n,\alpha}$) will converge (in $L^1$) 
as $n\to\infty$ to the density of an ACCIM. This, in principle,
allows one to select an ``entropy maximising'' ACCIM; the entropy
maximisation spreads mass as uniformly as possible, given the 
 condition of being a CIM. Solutions to each problem ($P_{n,\alpha}$) 
can be calculated via convex duality, provided there are no
``bad functions'' ($\M^*\lambda$ which fail the 
condition~(\ref{eq:condition}) in Theorem~\ref{th:2}). This condition
can be ensured by a basis dependent domain reduction (Lemma~\ref{lem:reduce}
and Theorem~\ref{th:3}), leading to a domain reduced dual 
problem~($\hat{D}_{n,\alpha}$). We now make a specific choice of test functions, 
reminiscent of Ulam's method~\cite{ulam60,FD03,F2001}. We 
identify the reduced domain~$\hat{A}$ (Lemma~\ref{lem:reduceddomain}), derive the relevant optimality equations (Lemma~\ref{lem:opteq}) and present a convergent fixed point method for their solution.

\subsection{Piecewise constant test functions and domain 
reduction}

Let $\{\psi_j\}$ be obtained from a sequence of increasingly 
fine partitions of $A$. 
In particular, let $\mathcal{B}_n$ be a partition of $A$ into
measurable subsets $\{B_1,\ldots, B_n\}$ and put $\psi_j=\mathbf{1}_{B_j}$. 
Notice that $\mathbf{1}_{A}=\sum_{j=1}^n\psi_j$ so the 
partition of unity assumption is satisfied ({\em c.f.} Section~\ref{sec:convexdual}).
To derive and solve the optimality equations for ($\hat{D}_{n,\alpha}$), notice that $\M^*\lambda$ is a piecewise
constant function, on elements of  $\mathcal{B}_n\vee \{T^{-1}\mathcal{B}_n,H_1\}$:   
\begin{align}\label{eq:mstarl}
\M^*\lambda &= 
\mathbf{1}_{A_1}\sum_{j,k=1}^n(\lambda_0+\lambda_j-\alpha\,\lambda_k)\mathbf{1}_{B_j}\circ T\mathbf{1}_{B_k}
+\mathbf{1}_{H_1}\sum_{k=1}^n(-\alpha\,\lambda_k)\,\mathbf{1}_{B_k}\nonumber \\
&=\sum_{j,k=1}^n(\lambda_0+\lambda_j-\alpha\,\lambda_k)\mathbf{1}_{B_k\cap T^{-1}B_j}
-\alpha\,\sum_{k=1}^n\lambda_k\,\mathbf{1}_{H_1\cap B_k}
\end{align}
(note that $\mathbf{1}_{A_1}=\mathbf{1}_{A\cap T^{-1}A}
=\sum_{jk}\mathbf{1}_{B_k\cap T^{-1}B_j}$).

\begin{definition}\label{def:MX} 
For the partition~$\mathcal{B}_n$, form a matrix $C$ and vector $\mathbf{c}$ by putting
$$C_{kj}=m(B_k\cap T^{-1}B_j)\qquad\mbox{and}\qquad c_k=m(H_1\cap B_k)\qquad j,k=1,\ldots,n.$$
A set $B_j$ is {\bf reachable\/} from $B_k$ if there 
is $n>0$ such that $(C^n)_{kj}>0$; write $k\leadsto j$.
\end{definition}

\begin{remark} The entries of the matrix $C$ are the same data needed to compute the (sub)stochastic transition matrices used by Ulam's method.\end{remark}

\begin{lemma}\label{lem:reduceddomain}
Suppose that $(T,A,m)$ is a nonsingular open 
dynamical system and that~$m(A_\infty)=0$.
Fix $\alpha,n$ and let $\hat{A}$ be the reduced domain 
when $\M^*$ is constructed from the partition~$\mathcal{B}_n$.
Then $\hat{A}$ is the union of those $B_k$ where either $k\leadsto k$ 
or there is at least one $i$ for which $i\leadsto i\leadsto k$; 
in particular, $\hat{A}$ is measurable. 
\end{lemma}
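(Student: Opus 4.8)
The plan is to translate the basis-specific domain reduction into a purely combinatorial statement about the digraph encoded by $C$, and then read off $\hat{A}$ cell-by-cell. Since $\psi_j=\mathbf{1}_{B_j}$ and the relevant ``bad'' multipliers are those with $\lambda_0=\lambda^T\bfe=0$ (exactly as in the proof of Lemma~\ref{lem:reduce}), equation~(\ref{eq:mstarl}) tells us that such an $\M^*\lambda$ is piecewise constant, equal to $\lambda_j-\alpha\,\lambda_k$ on $B_k\cap T^{-1}B_j$ and to $-\alpha\,\lambda_k$ on $H_1\cap B_k$. Writing $v=(\lambda_1,\dots,\lambda_n)$, the constraint $\M^*\lambda\le 0$ $m$-a.e. becomes the finite linear system
$$v_j\le\alpha\,v_k\ \text{ whenever } C_{kj}>0,\qquad v_k\ge 0\ \text{ whenever } c_k>0,$$
and $\hat{A}$ is $A$ with the positive-measure atoms on which some such $\M^*\lambda$ is strictly negative removed. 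Let $G$ denote the index set of cells that are recurrent ($k\leadsto k$) or reachable from a cycle ($i\leadsto i\leadsto k$). The whole lemma then reduces to deciding, for each cell, whether every admissible $v$ forces $\M^*\lambda\equiv 0$ on $B_k$ (so $B_k$ is kept) or some admissible $v$ is strictly negative on a positive-measure part of $B_k$ (so $B_k$ is deleted).

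The step I would isolate first, and which I expect to be the main obstacle, is the single place where the hypothesis $m(A_\infty)=0$ enters: every cell of positive measure must reach a hole cell $\{k:c_k>0\}$ in the digraph. To prove this I would argue by contradiction. If $k$ reaches no hole cell, let $S$ be the forward-closed set of cells reachable from $k$ and put $W=\cup_{i\in S}B_i$. Using $c_i=0$ and $C_{ij}=0$ for $i\in S,\ j\notin S$, the partition identity $m(B_i)=\sum_j C_{ij}+c_i$ gives $m(W)=m(W\cap T^{-1}W)$, so $T(W)\subseteq W\pmod m$; nonsingularity propagates this to $W\subseteq A_\infty\pmod m$, forcing $m(W)=0$ and hence $m(B_k)=0$. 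This is what converts the analytic escape hypothesis into the combinatorial fact that admissible multipliers are nonnegative.

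With that in hand the ``kept'' direction $\cup_{k\in G}B_k\subseteq\hat{A}$ is short. Any admissible $v$ satisfies $v_k\ge 0$ at every positive-measure cell: chaining the edge inequalities along a path $k\leadsto h$ to a hole cell yields $0\le v_h\le\alpha^{\ell}v_k$. On the other hand, around a cycle $k\leadsto k$ of length $m$ one gets $v_k\le\alpha^m v_k$, whence $v_k\le 0$, and if $i\leadsto i\leadsto k$ then $v_k\le\alpha^{p}v_i\le 0$. Thus $v_k=0$ for every $k\in G$. As $G$ is forward-closed under $\leadsto$, for $k\in G$ we get $v_j=\alpha\,v_k=0$ on each edge $k\to j$ and $v_k=0$ at any hole, so $\M^*\lambda\equiv 0$ on $B_k$ and $B_k$ is never deleted.

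For the converse $\hat{A}\subseteq\cup_{k\in G}B_k$ I would exhibit, for the complementary set $K'=G^{c}$, a single admissible $v$ strictly negative on every atom of each $B_k$ with $k\in K'$ — the discrete analogue of the bad function of Example~\ref{eg:bad}. Since no cycle meets $K'$, the induced subgraph on $K'$ is acyclic, so there is a depth function $d$ with $d(j)\ge d(k)+1$ along edges inside $K'$; set $v_k=(\alpha/2)^{d(k)}$ for $k\in K'$ and $v_k=0$ for $k\in G$. Forward-closedness of $G$ together with $d(j)\ge d(k)+1$ make $v$ admissible, while for $k\in K'$ every outgoing atom has value $v_j-\alpha\,v_k\le(\alpha/2-\alpha)v_k<0$ and every hole atom has value $-\alpha\,v_k<0$; because $m(B_k)=\sum_j C_{kj}+c_k>0$, at least one such atom has positive measure, so $B_k$ is deleted. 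Finally $\hat{A}=\cup_{k\in G}B_k$ is a finite union of the measurable cells $B_k$, which gives measurability.
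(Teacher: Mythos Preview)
Your proof is correct and follows essentially the same route as the paper's: translate $\M^*\lambda\le 0$ into the edge/hole inequalities on $v=(\lambda_1,\dots,\lambda_n)$, use the escape hypothesis to force $v\ge 0$, use cycles to force $v|_G=0$ (hence $\M^*\lambda\equiv 0$ on $\cup_{k\in G}B_k$), and then build the bad function on $K'=G^c$ via the $(\alpha/2)^{\text{depth}}$ construction of Example~\ref{eg:bad}. The only place you diverge from the paper is the argument that every positive-measure cell reaches a hole cell: the paper picks $n$ with $m(B_i\cap H_n)>0$ and steps down to $H_1$ by nonsingularity, whereas your forward-invariant-set contradiction (showing $m(W)=m(W\cap T^{-1}W)$ forces $W\subseteq A_\infty\pmod m$) is an equally valid and perhaps cleaner way to extract the same combinatorial fact.
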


\begin{proof} Let $\lambda^T\,\bfe=0$ and suppose that $\M^*\lambda\leq 0$. 
From equation~(\ref{eq:mstarl}), we immediately have
$$\lambda_j\leq\alpha\lambda_k \quad\mbox{when~} C_{kj}>0\qquad\mbox{and}\qquad
\lambda_k\geq0\quad\mbox{when~}c_k>0.$$
Since $C$ is a non-negative matrix, $i\leadsto k$ iff there is a string $i=i_0,i_1\ldots,i_n=k$
such that each $C_{i_li_{l+1}}>0$. Thus, by induction, if $i\leadsto k$ then there is an $n>0$ 
such that $\lambda_k\leq \alpha^n\,\lambda_i$. 
First, if $c_k>0$ and $i\leadsto k$ we infer that
$\lambda_i\geq 0$. Next, since $m(A_\infty)=0$, for every $B_i$ there is an $n$ for which 
$m(B_i\cap H_n)>0$. Then, since $T$ is nonsingular, there is $B_l$ such that $C_{il}>0$ and
$m(B_l\cap H_{n-1})>0$. By induction, there is a $k$ for which $i\leadsto k$ and $c_k>0$.
Hence, $\lambda_i\geq 0$ for all $i$. Now, if $k\leadsto k$, again use the inequality
$\lambda_k\leq \alpha^n\lambda_k$ to infer that   $\lambda_k\leq 0$ and hence $\lambda_k=0$.
Similarly, if $i\leadsto i \leadsto k$, $\lambda_k\leq \alpha^n\,\lambda_i=0$, so also $\lambda_k=0$.
Suppose that $k$ is one of the indices identified in the statement of the lemma. Then~(\ref{eq:mstarl})
implies that $\mathbf{1}_{B_k}\M^*\lambda = \sum_j\lambda_j\mathbf{1}_{B_k\cap T^{-1}B_j}\geq 0$;
since $M^*\lambda\leq 0$, $B_k\cap \mbox{supp}(\M^*\lambda)=\emptyset$. To complete the
proof, let $\mathcal{K}$ denote those $\hat{k}$ which fail the condition in the statement. For each such
$\hat{k}$, let $N(\hat{k})=\max\{N~:~(C^N)_{i\hat{k}}>0 ~\exists i\}$; $N(\hat{k})$ may be $0$. 
(Note that if $(C^{N})_{i{k}}>0$ for $N>n$ then there is a sequence $i=i_0,i_1,\ldots,i_n={k}$ for which $C_{i_li_{l+1}}>0$; this list must contain at least one repeat, implying ${k}\notin\mathcal{K}$.)
Note that if $C_{i\hat{k}}>0$ then $N(i)+1\leq N(\hat{k})$. Finally, for each $\hat{k}\in\mathcal{K}$ put $\lambda_{\hat{k}}=(\alpha/2)^{N(\hat{k})}$, with $\lambda_k=0$ for $k\notin\mathcal{K}$. Then,
$C_{i\hat{k}}>0$ implies $\lambda_i(\alpha/2)\geq \lambda_{\hat{k}}$.
Hence $\lambda_{\hat{k}}-\alpha\,\lambda_i\leq - \lambda_{\hat{k}}<0$. It follows that $\mbox{supp}(\M^*\lambda)=\cup_{\hat{k}\in\mathcal{K}}B_{\hat{k}}$. 
\end{proof}

\begin{remark} The set $\hat{A}$ identified by the lemma is the union of 
all $B_k$ which are reachable from the strongly connected components of the directed graph
implied by the non-zero elements of the matrix $C$. This can be found quickly and easily. 
\end{remark}

Now, form the matrix $\hat{C}$ and vector $\hat{\mathbf{c}}$ by retaining 
those entries  where $B_k$ is identified as belonging to $\hat{A}$, and 
setting the rest to $0$. These ingredients can be used to 
obtain explicit formulae for the optimality conditions for ($\hat{D}_{n,\alpha}$). 
Using equation~(\ref{eq:mstarl}), 
$$\hat{Q}(\lambda)=\alpha\lambda_0 - \sum_{jk}\exp(\lambda_0-1+\lambda_j-\alpha\,\lambda_k)\hat{C}_{kj} -\sum_k\exp(-1-\alpha\,\lambda_k)\,\hat{c}_k.$$
Because $\hat{Q}$ is differentiable and concave, the maximising $\lambda^*$ 
is found by solving the first order conditions 
$\frac{\partial\hat{Q}}{\partial\lambda_i}=0$. The following lemma
writes these conditions in a more convenient form.


\begin{lemma}\label{lem:opteq}
Assume the conditions of Lemma~\ref{lem:reduceddomain} and let $\hat{A}$ be
as given there. Let $\hat{C},\hat{\mathbf{c}}$ be obtained similarly
to Definition~\ref{def:MX}, but using $\hat{m}=m|_{\hat{A}}$ in place of $m$.
If $\{x_i\}_{i=1}^n$ are positive numbers solving
$$x_i^{1+\alpha} = \alpha\,\frac{\sum_j \hat{C}_{ij}x_j +\hat{c}_i}{
\sum_k \hat{C}_{ki} x_k^{-\alpha}}
$$
and $\lambda_0^*$ satisfies $e^{\alpha\,\lambda_0^*-1}\sum_j\hat{C}_{ij}x_jx_i^{-\alpha}=\alpha$ then $\lambda_i^*:=\log(x_i)-\lambda_0^*$ give the solution
to ($\hat{D}_{n,\alpha}$).
\end{lemma}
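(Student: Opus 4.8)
The plan is to compute the first-order conditions $\partial\hat{Q}/\partial\lambda_i = 0$ explicitly from the formula for $\hat{Q}(\lambda)$ given just before the lemma, and then introduce the substitution $x_i := e^{\lambda_i^* + \lambda_0^*}$ (equivalently $\lambda_i = \log x_i - \lambda_0$) to unpack the stationarity equations into the stated polynomial-type system. First I would differentiate $\hat{Q}$ with respect to $\lambda_i$ for $i \geq 1$. The term $\alpha\lambda_0$ does not contribute; the double sum $\sum_{jk}\exp(\lambda_0 - 1 + \lambda_j - \alpha\lambda_k)\hat{C}_{kj}$ contributes in two ways, once through the $\lambda_j$ appearing (giving the $j = i$ slice $\sum_k \hat{C}_{ki}\exp(\cdots)$, wait—careful with which index is differentiated) and once through the $-\alpha\lambda_k$ appearing (giving the $k = i$ slice with a factor $-\alpha$); the last sum $\sum_k \exp(-1 - \alpha\lambda_k)\hat{c}_k$ contributes the $k = i$ term with factor $-\alpha$. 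Setting the derivative to zero yields a relation among the exponentials $e^{\lambda_0 - 1 + \lambda_i - \alpha\lambda_k}$ and $e^{-1-\alpha\lambda_i}$.

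Next I would perform the change of variables. With $x_i = e^{\lambda_i + \lambda_0}$ one has $e^{\lambda_0 - 1 + \lambda_j - \alpha\lambda_k} = e^{-1}e^{\lambda_0}\cdot e^{\lambda_j}\cdot e^{-\alpha\lambda_k}$, and each factor $e^{\lambda_j}$ becomes $x_j e^{-\lambda_0}$ while $e^{-\alpha\lambda_k}$ becomes $x_k^{-\alpha}e^{\alpha\lambda_0}$. The powers of $e^{\lambda_0}$ should be tracked carefully and will combine into a single overall constant that is absorbed when I impose the separate $i = 0$ stationarity condition. Collecting terms, the condition $\partial\hat{Q}/\partial\lambda_i = 0$ rearranges into
$$
x_i^{1+\alpha}\sum_k \hat{C}_{ki}\,x_k^{-\alpha} = \alpha\Bigl(\sum_j \hat{C}_{ij}x_j + \hat{c}_i\Bigr),
$$
which is precisely the asserted fixed-point relation after dividing. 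The condition $\partial\hat{Q}/\partial\lambda_0 = 0$ gives $\alpha = \sum_{jk}\exp(\lambda_0 - 1 + \lambda_j - \alpha\lambda_k)\hat{C}_{kj} + \sum_k \exp(-1-\alpha\lambda_k)\hat{c}_k$; restricting attention to a fixed row index $i$ and using the per-$i$ balance recovers the stated normalisation $e^{\alpha\lambda_0^* - 1}\sum_j \hat{C}_{ij}x_j x_i^{-\alpha} = \alpha$, which determines $\lambda_0^*$ once the $x_i$ are known. Finally I would invoke Theorem~\ref{th:3}, which guarantees that ($\hat{D}_{n,\alpha}$) attains its maximum at a finite $\lambda^*$, so the first-order conditions are both necessary and (by concavity of $\hat{Q}$) sufficient for optimality; hence any positive solution $\{x_i\}$ of the system, together with the compatible $\lambda_0^*$, yields the optimiser via $\lambda_i^* = \log x_i - \lambda_0^*$.

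I expect the main obstacle to be purely bookkeeping rather than conceptual: keeping the roles of the summation indices $j$ (row/image) and $k$ (column/preimage) straight in the double sum, since $\lambda_i$ enters $\exp(\lambda_0 - 1 + \lambda_j - \alpha\lambda_k)$ both as a $\lambda_j$ (when $j = i$) and as a $\lambda_k$ (when $k = i$), with opposite signs and an extra factor $\alpha$ in the second occurrence. A related subtlety is verifying that the $x_i$ are all strictly positive and finite so that the substitution $\lambda_i = \log x_i$ is legitimate—this follows from finiteness of $\lambda^*$ (Theorem~\ref{th:3}) on the reduced domain, where the problematic directions have been pushed into $\ker(\M^*)$ modulo $\hat{m}$—and that the factor $e^{\alpha\lambda_0^* - 1}$ is genuinely independent of $i$, which is what makes the single scalar equation for $\lambda_0^*$ well posed. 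Once the index accounting is done correctly, the algebraic rearrangement into the displayed fixed-point form is routine.
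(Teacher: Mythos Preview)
Your approach is essentially identical to the paper's: differentiate $\hat{Q}$, set first-order conditions to zero, substitute $x_i=e^{\lambda_i+\lambda_0}$, and rearrange. One small slip: in the $\partial\hat{Q}/\partial\lambda_0$ computation the term $\sum_k\exp(-1-\alpha\lambda_k)\hat{c}_k$ does not depend on $\lambda_0$, so the $i=0$ optimality equation is simply $\alpha=\sum_{jk}\exp(\lambda_0-1+\lambda_j-\alpha\lambda_k)\hat{C}_{kj}$, without the $\hat{c}_k$ contribution; after substitution this becomes $\alpha=e^{\alpha\lambda_0-1}\sum_{jk}\hat{C}_{kj}x_jx_k^{-\alpha}$, which fixes $\lambda_0^*$.
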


\begin{proof} By differentiation, the optimality equations for 
($\hat{D}_{n,\alpha}$) are
$$
\begin{array}{rcrlr}
 0 &=&\alpha-\sum_{jk}\exp(\lambda_0-1+\lambda_j-\alpha\,\lambda_k)\hat{C}_{kj}
&&(i=0)\phantom{.}\\[0.5em]
0 &=& \alpha \sum_{j}\exp(\lambda_0-1+\lambda_j-\alpha\,\lambda_i)\hat{C}_{ij}
&- \sum_{k}\exp(\lambda_0-1+\lambda_i-\alpha\,\lambda_k)\hat{C}_{ki}\\
&&&+\alpha\,\exp(-1-\alpha\,\lambda_i)\,\hat{c}_i &(1\leq i\leq n).
\end{array}
$$
The $i=0$ equation is a normalisation. By putting $x_i=e^{\lambda_i+\lambda_0}$
for $1\leq i\leq n$ the latter equations are equivalent to
$$0 = \alpha\,\sum_j \hat{C}_{ij}x_jx_i^{-\alpha} 
- \sum_k \hat{C}_{ki} x_ix_k^{-\alpha} +\alpha\,\hat{c}_ix_i^{-\alpha}.$$
Multiplying by $x_i^{\alpha}$ and rearranging gives the equations in the
statement of the lemma.
\end{proof}


\subsection{Iterative solution of the optimality equations}

We now summarise the numerical method.

\renewcommand\theenumi{\arabic{enumi}}

\begin{enumerate}
\item Specify $\alpha$ ($=e^{-\rho}$ where $\rho$ is the preferred escape rate).
\item Fix a measurable partition $\mathcal{B}_n=\{B_j\}_{j=1}^n$ of $A$.
\item Obtain the matrix $C$ and vector $\mathbf{c}$ of partition overlap masses
(as specified in Definition~\ref{def:MX}).
\item Use Lemma~\ref{lem:reduceddomain} to identify $\hat{A}$ and thus
form the dual problem~($\hat{D}_{n,\alpha}$).
\item Solve the optimality equations via Lemma~\ref{lem:opteq}. This can 
be accomplished with a fixed point iteration: set $\mathbf{x}_0=[1,\ldots,1]^T$
and iterate
$$\mathbf{x}_{t+1} = \Psi(\mathbf{x}_t)\qquad\mbox{where}\qquad
[\Psi(\mathbf{x})]_i =  \left( \alpha\,\frac{\sum_j \hat{C}_{ij}x_j +
\hat{c}_i}{\sum_k \hat{C}_{ki} x_k^{-\alpha}}\right)^{1/(1+\alpha)}$$
until desired accuracy is achieved.
\item Recover the optimal $\lambda^*$ via Lemma~\ref{lem:opteq} and
solution $f_{n,\alpha}$ to ($P_{n,\alpha}$) from Theorem~\ref{th:3}.
\item (Optional) Calculate $H(f_{n,\alpha})$.
\end{enumerate}

\subsection*{Sketch proof of convergence of the fixed point iteration}

Assume the escape hypothesis~(\ref{eqn:escape}).

Without loss of generality, assume that all sums in the definition of $\Psi$ 
are nonempty\footnote{Note that $\hat{C}_{ki}=0$ $\forall k$ only if 
$B_i\cap \hat{A}=\emptyset$. In this case also each $\hat{C}_{ij}=\hat{c}_i=0$
and the value of $\M^*\lambda$ on $B_i$ is irrelevant to the solution of
($P_{n,\alpha}$) (by Lemma~\ref{lem:reduce}). The function $\Psi$ can be defined
to be $1$ on such coordinates.}. Because ($\hat{D}_{n,\alpha}$) 
actually has a solution,
there is $\mathbf{y}^*$ for which $\Psi(\mathbf{y}^*)=\mathbf{y}^*$. 
For any $\mathbf{x}\in\R_+^n$ let 
$$V(\mathbf{x}) = \min\left\{R~:~ 
\frac{1}{R}\leq \frac{x_i}{y_i^*}\leq R, 1\leq i\leq n\right\}.$$
Clearly $V(\mathbf{x})\geq 1$ and $V(\mathbf{x})=1$ 
iff $\mathbf{x}=\mathbf{y}^*$. Moreover,
\begin{equation}\label{e.psi}
[\Psi(\mathbf{x})]_i \leq \left(\alpha\,\frac{V(\mathbf{x})\sum_j 
\hat{C}_{ij}y_j^* + \hat{c}_i}{V(\mathbf{x})^{-\alpha}\sum_k \hat{C}_{ki} 
(y^*_k)^{-\alpha}}\right)^{1/(1+\alpha)}\leq 
V(\mathbf{x})\,[\Psi(\mathbf{y}^*)]_i=V(\mathbf{x})\,{y}^*_i.
\end{equation}
Together with a similar inequality involving $1/V$, one has
$V\circ \Psi\leq V$. Thus $\{V\circ\Psi^t(\mathbf{x}_0)\}$ 
is a decreasing sequence, bounded below by $1$. 
Because $V(\mathbf{x}_0)<\infty$, all $\{\mathbf{x}_t\}$ are
confined to a closed, bounded rectangle in $\mathbb{R}^n$; let
$\mathbf{x}_*$ be a limit point of $\{\mathbf{x}_t\}$. Then 
$V\circ\Psi(\mathbf{x}_*)=V(\mathbf{x}_*)$.

Suppose that $i$ is such that\footnote{A similar argument works if
$i$ is such that  $[\Psi(\mathbf{x}_*)]_{i}=y^*_{i}/V(\mathbf{x}_*)$.}  
$[\Psi(\mathbf{x}_*)]_{i}=V(\mathbf{x}_*)y^*_{i}$. 
An inductive argument (using the equality form of (\ref{e.psi})) 
shows that $[\mathbf{x_*}]_k = V(\mathbf{x}_*)y^*_k$ and 
$\hat{c}_k=V(\mathbf{x}_*)\hat{c}_k$ whenever $i\leadsto k$. 
Since there is at least one $k$ with $\hat{c}_k>0$ reachable from $i$, 
$V(\mathbf{x}_*)=1$. Thus $\mathbf{x}_*=\mathbf{y}^*$ 
and $\mathbf{x}_t\to\mathbf{y}^*$.

\subsection{Examples}\label{sec:eg}

We present two simple examples to demonstrate the effectiveness of the
method; each implementation takes only a few dozen lines of {\sc Matlab\/}
code.

\begin{example}[Tent-map with slope 3]\label{eg:tent}
Let $X=\R$, $A=[0,1]$ and put
$$T(x) = \left\{\begin{array}{ll} 3\,x & x < 0.5 \\ 3\,(1-x) &x > 0.5 \end{array}\right.$$
Then, $A_1= [0,1/3]\cup [2/3,1]$ and $H_1=(1/3,2/3)$.
The ``natural'' ACCIM is Lebesgue measure with density $f_*=1$, 
and corresponding value of $\alpha=2/3$. 
In this case, $K_n=\emptyset=K_\infty$ (for all $n$)
and the survivor set $\Omega=A_\infty$ is the usual middle thirds Cantor set.
At a selection of values of  $\alpha\in(0,1)$ we applied the MAXENT method 
using the partition based test 
functions~$\{\psi_j= \mathbf{1}_{[(j-1)/1000,j/1000)}\}_{j=1}^{1000}$. 
The results are 
depicted in Figure~\ref{fig:tent}. As expected, for small values of $\alpha$,
escape is rapid and the ACCIMs are strongly concentrated on the hole $H_1$ and
its first few preimages. For $\alpha$ near 1, escape is slow and the ACCIMs
are more strongly concentrated around the repelling Cantor set~$A_\infty$;
see Figure~\ref{fig:tentb}.
The MAXENT method can be tuned to produce a ``most uniform'' 
approximate ACCIM, and the maximal entropy solution is in fact the 
constant density function, appearing at $\alpha=2/3$.

\begin{figure}
\begin{center}
\includegraphics[width=0.8\textwidth]{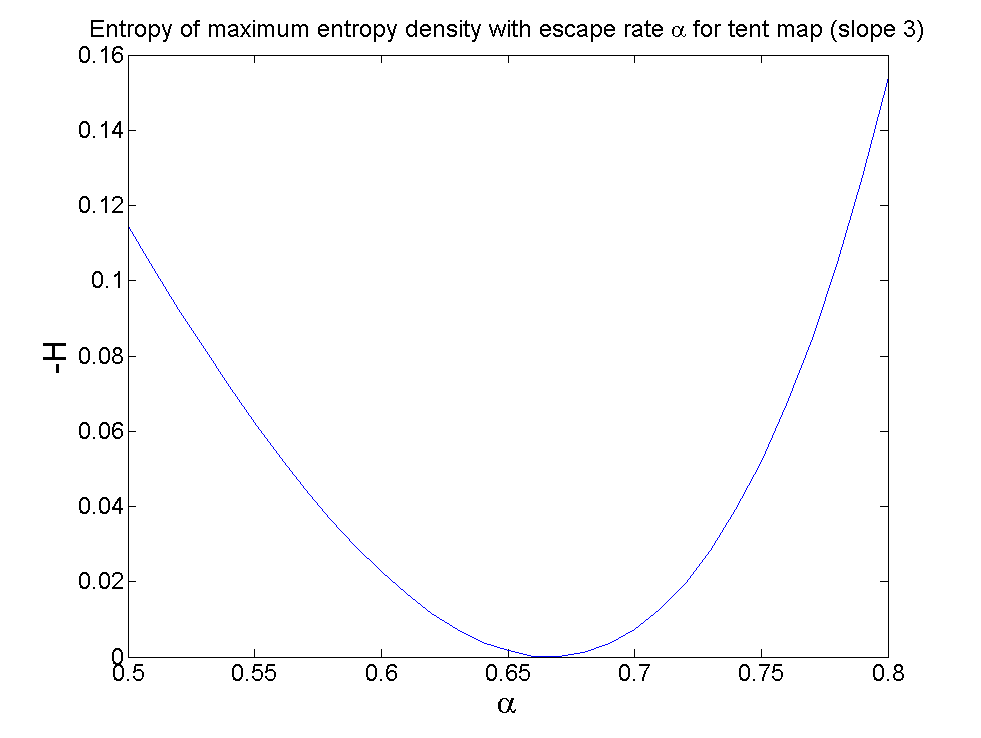}

\includegraphics[width=\textwidth]{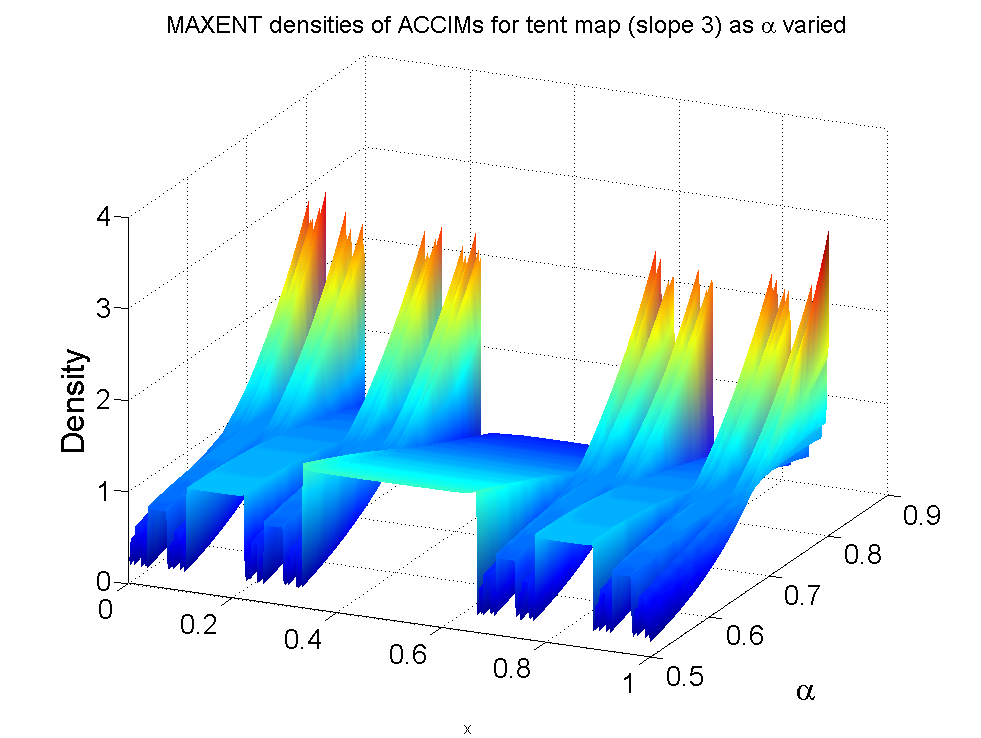}
\end{center}
\caption{Example~\ref{eg:tent}.
Above: (neg)entropy $-H(f_{n,\alpha})$ of slope~$3$ tent map ACCIMs, 
depending on $\alpha$ computed via MAXENT with uniform $n=1000$ subinterval
partition of $[0,1]$. Below: densities of the computed ACCIMs as a function
of $x\in[0,1]$ and $\alpha$.}
\label{fig:tent}
\end{figure}

\begin{figure}
\begin{center}
\includegraphics[width=0.9\textwidth]{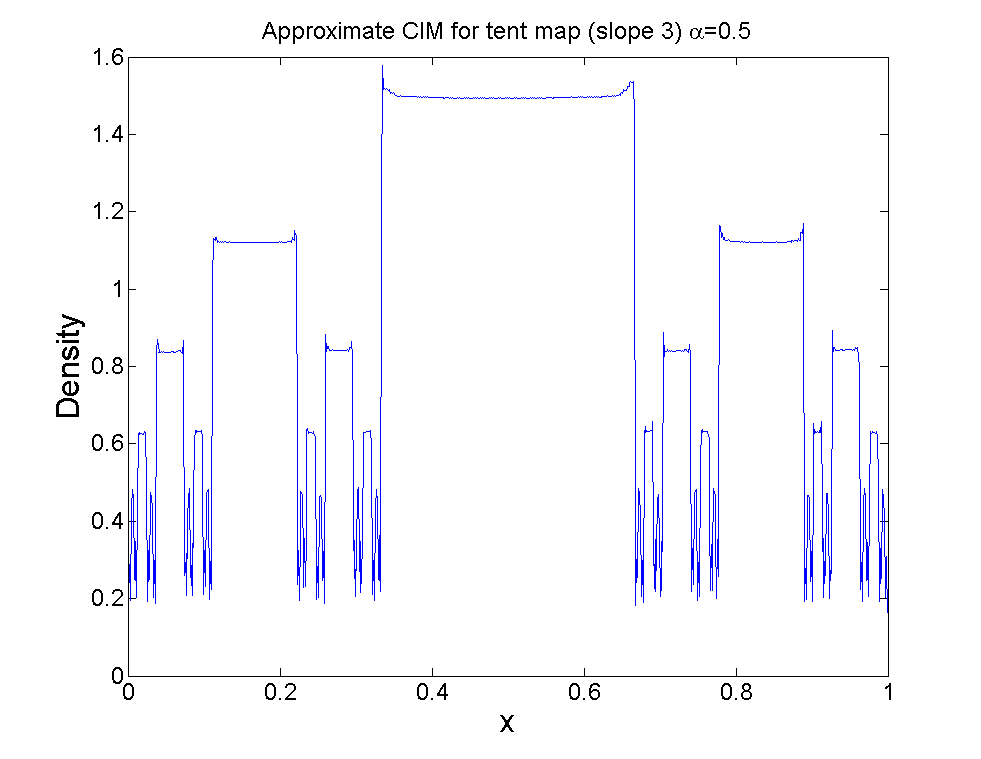}

\includegraphics[width=0.9\textwidth]{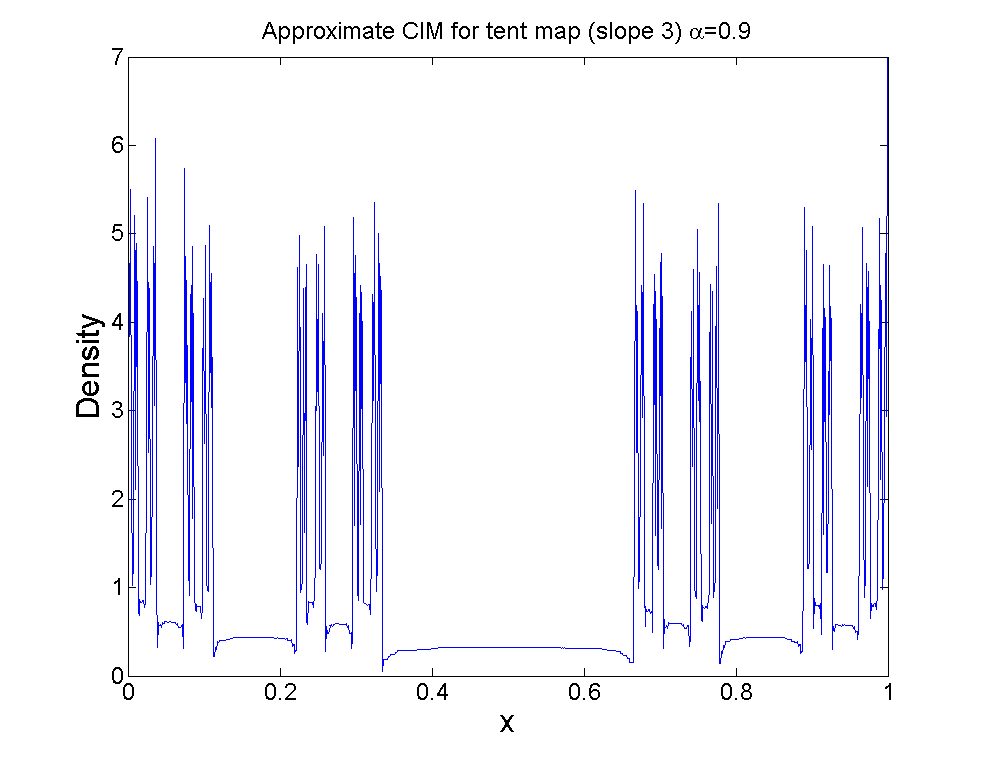}
\end{center}
\caption{Example~\ref{eg:tent} (compare Figure~\ref{fig:tent}).
Above: approximate density $f_{1000,0.5}$ of slope~$3$ tent map; note the
concentration of mass on $H_1=[1/3,2/3]$ and its preimages. Below:
approximate density $f_{1000,0.9}$ of slope~$3$ tent map; note the
concentration of mass on the survivor Cantor set $A_\infty$.}
\label{fig:tentb}
\end{figure}
\end{example}

\begin{example}[A linear saddle]\label{eg:saddle}
Let $A=[-1,1]^2$ and $m$ Lebesgue measure on $X=\R^2$; 
put $T(x,y)=(2\,x,0.8\,y)$.
Then $K_n=[-1,1]\times \pm(0.8^{(n+1)},0.8^{n}]$, $A_\infty=\{0\}\times [-1,1]$
and $H_\infty=[-1,1]\times\{0\} \setminus (0,0)$.
This linear map has a saddle-type fixed point at~$(0,0)$. The only invariant
measure is the delta~measure at $0$. All conditionally invariant measures
are supported on the local unstable manifold to the origin; in this case,
the segment of the $x$--axis contained in~$A$.
Indeed,  $m(H_\infty)=0$ and there are no ACCIMs. There are,
however, many CIMs which are AC with respect to the 
one-dimensional Lebesgue measure on the $x$-axis, and these are
detected by the numerical method. The domain reduction to $\hat{A}$
is nontrivial here, leading to a localisation in support of the MAXENT 
approximations. Calculations were performed for several~$\alpha$,
with $10000$ test functions being the characteristic functions of a 
$100\times100$ subdivision of~$A$; in this case the set 
$\hat{A}=[-1,1]\times[-0.08,0.08]$. Some CIM estimates are presented in 
Figures~\ref{fig:saddlea} and~\ref{fig:saddleb}.

\begin{figure}
\begin{center}
\includegraphics[width=0.9\textwidth]{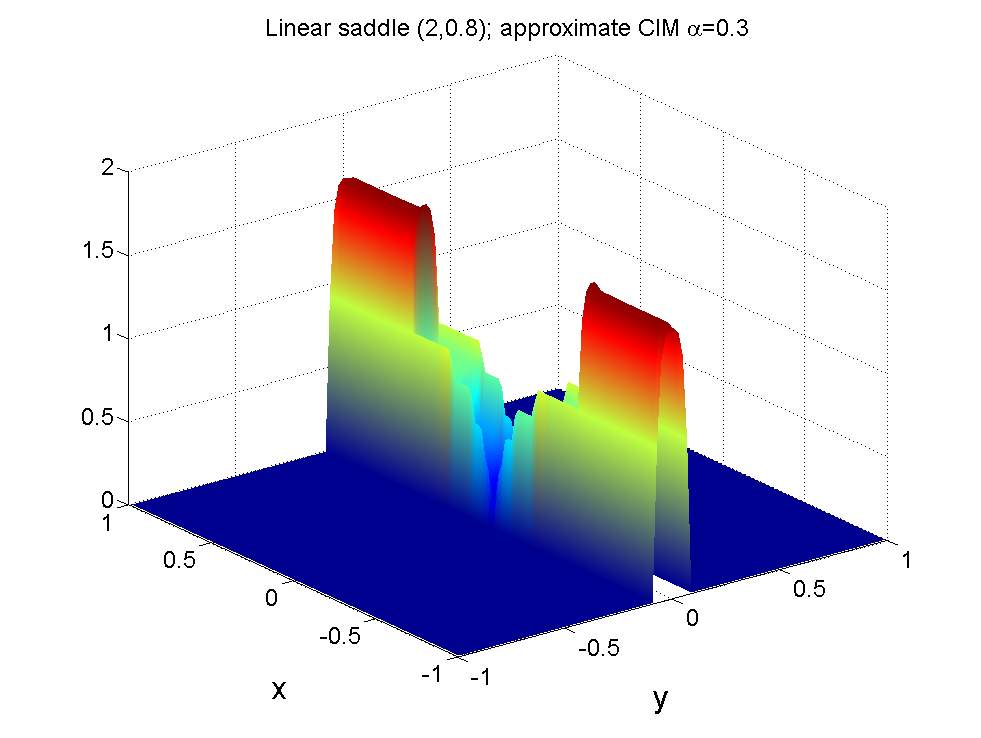}

\includegraphics[width=0.9\textwidth]{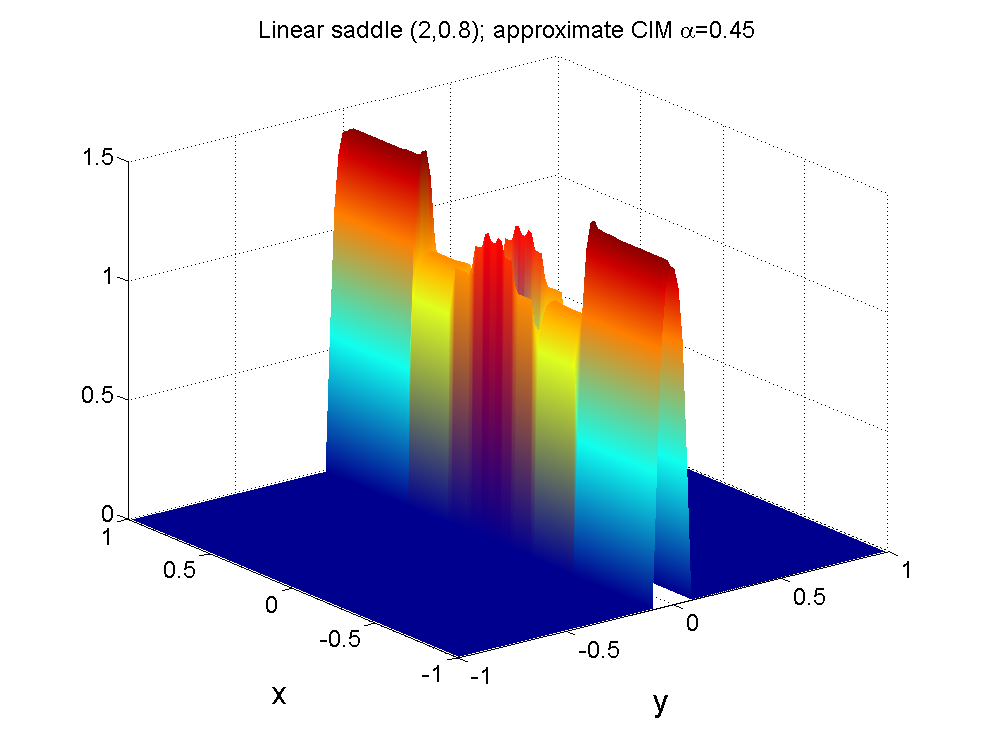}
\caption{Example~\ref{eg:saddle}. MAXENT approximations of CIMs for
for $\alpha=0.3$ (above) and $\alpha=0.45$ (below) 
for an open system with a simple saddle.}
\label{fig:saddlea}
\end{center}
\end{figure}

\begin{figure}
\begin{center}
\includegraphics[width=0.9\textwidth]{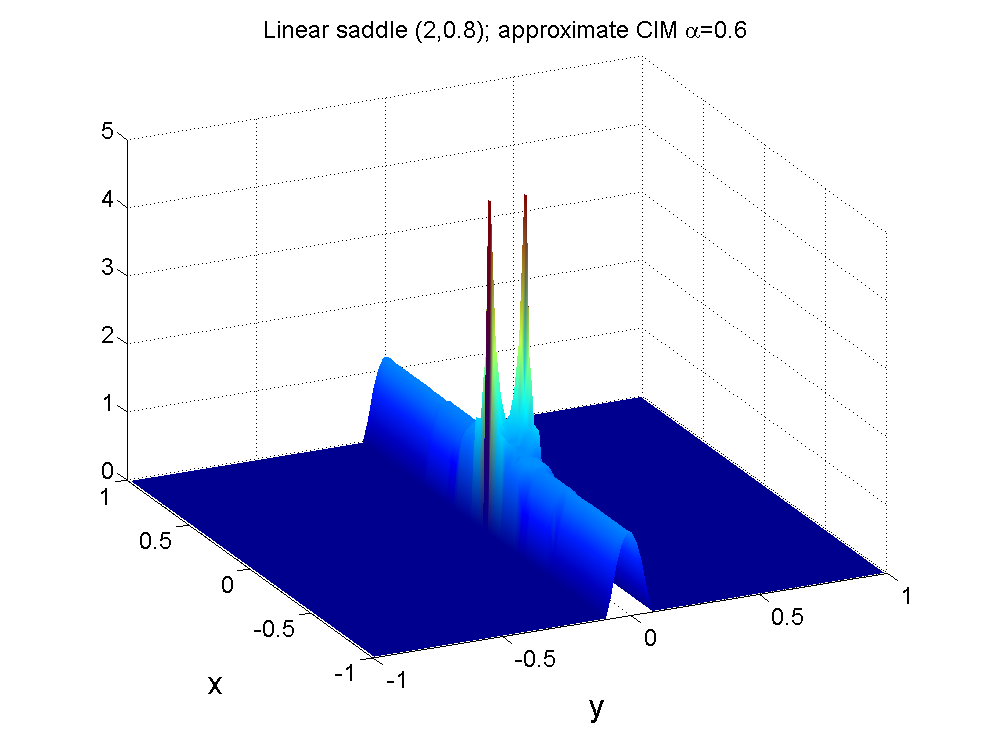}

\includegraphics[width=0.9\textwidth]{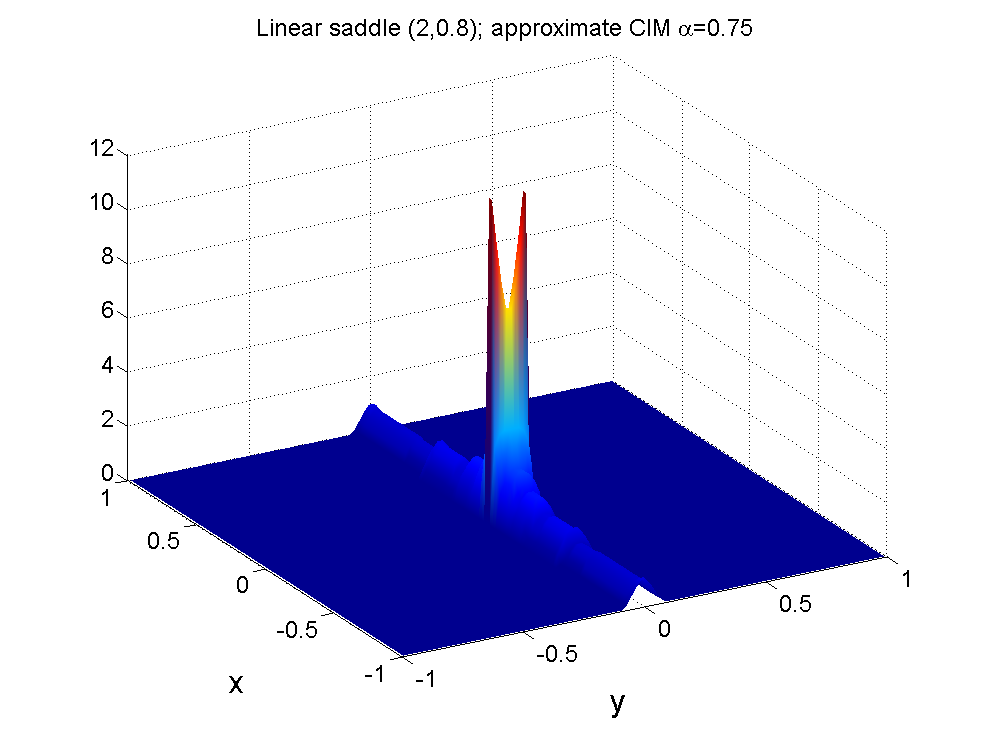}
\caption{Example~\ref{eg:saddle}. MAXENT approximations of CIMs for
for $\alpha=0.6$ (above) and $\alpha=0.75$ (below) 
for an open system with a simple saddle.}
\label{fig:saddleb}
\end{center}
\end{figure}
\end{example}

\section{Concluding remarks}

The MAXENT approach to calculating approximate ACCIMs has a sound analytical 
basis (from optimisation theory), and is easy to implement. With
test functions $\{\psi_j\}$ derived from a partition of phase space, 
the basic dynamical inputs to the computational scheme are the integrals 
$\int \psi_j\circ T\,\psi_i\,dm$ (which could be estimated from trajectory data).
For each choice of test functions, feasibility of the dual optimisation problem
depends on reducing the domain of the problem to exclude certain `backwards transient' 
parts of the phase space. With test functions derived from a partition, the resulting
`reduced domain' covers any recurrent set, and local unstable manifolds.

The work reported in this chapter suggests a number of avenues of future enquiry:
\begin{itemize}
\item are entropy-maximising ACCIMs of any particular dynamical relevance?
\item given that the analysis and computation of the variational approach
is similar with convex functionals other than $H(\cdot)$, are other choices of
objective more appropriate?
\item how is the quality of approximation affected by the choice of test 
functions $\{\psi_k\}$? 
\item how does the functional $H(f_{n,\alpha})$ depend on $\alpha$ (and $n$)?
\item can dynamically interesting measures on {\em unstable manifolds\/} be
recovered from this approach?
\end{itemize}

\bibliographystyle{spmpsci}

\begin{thebibliography}{10}
\providecommand{\url}[1]{{#1}}
\providecommand{\urlprefix}{URL }
\expandafter\ifx\csname urlstyle\endcsname\relax
  \providecommand{\doi}[1]{DOI~\discretionary{}{}{}#1}\else
  \providecommand{\doi}{DOI~\discretionary{}{}{}\begingroup
  \urlstyle{rm}\Url}\fi

\bibitem{AB10}
Afraimovich, V.S., Bunimovich, L.A.: Which hole is leaking the most: a
  topological approach to study open systems.
\newblock Nonlinearity \textbf{23}(3), 643--656 (2010).
\newblock \doi{10.1088/0951-7715/23/3/012}.
\newblock \urlprefix\url{http://dx.doi.org/10.1088/0951-7715/23/3/012}

\bibitem{BV12}
Bahsoun, W., Vaienti, S.: Metastability of certain intermittent maps.
\newblock Nonlinearity \textbf{25}(1), 107--124 (2012).
\newblock \doi{10.1088/0951-7715/25/1/107}.
\newblock \urlprefix\url{http://dx.doi.org/10.1088/0951-7715/25/1/107}

\bibitem{BL91b}
Borwein, J., Lewis, A.: Duality relationships for entropy-like minimization
  problems.
\newblock SIAM J. Control. Optim. \textbf{26}, 325--338 (1991)

\bibitem{BM06}
Bose, C.J., Murray, R.: Minimum `energy' approximations of invariant measures
  for nonsingular transformations.
\newblock Discrete Contin. Dyn. Syst. \textbf{14}(3), 597--615 (2006)

\bibitem{BM07}
Bose, C.J., Murray, R.: Duality and the computation of approximate invariant
  densities for nonsingular transformations.
\newblock SIAM J. Optim. \textbf{18}(2), 691--709 (electronic) (2007).
\newblock \doi{10.1137/060658163}.
\newblock \urlprefix\url{http://dx.doi.org/10.1137/060658163}

\bibitem{BDM10}
Bruin, H., Demers, M., Melbourne, I.: Existence and convergence properties of
  physical measures for certain dynamical systems with holes.
\newblock Ergod. Th. \& Dynam. Sys. \textbf{30}, 687--728 (2010)

\bibitem{CMS94}
Collett, P., Mart\'inez, S., Schmitt, B.: The {Yorke--Pianigiani} measure and
  the asymptotic law on the limit cantor set of expanding systems.
\newblock Nonlinearity \textbf{7}, 1437--1443 (1994)

\bibitem{CMS97}
Collett, P., Mart\'inez, S., Schmitt, B.: The {P}ianigiani--{Y}orke measure for
  topological markov chains.
\newblock Israel J. Math. \textbf{97}, 61--70 (1997)

\bibitem{Dem05a}
Demers, M.: Markov extensions and conditionally invariant measures for certain
  logistic maps with small holes.
\newblock Ergod. Th. \& Dynam. Sys. \textbf{25}, 1139--1171 (2005)

\bibitem{Dem05b}
Demers, M.: Markov extensions for dynamical systems with holes: an application
  to expanding maps of the interval.
\newblock Israel J. Math. \textbf{146}, 189--221 (2005)


\bibitem{demers13}
Demers, M.: Dispersing billiards with small holes (2013).
\newblock Preprint.

\bibitem{DY06}
Demers, M., Young, L.S.: Escape rates and conditionally invariant measures.
\newblock Nonlinearity \textbf{19}, 377--397 (2006).
\newblock \doi{10.1088/0951-7715/19/2/008}

\bibitem{DW12}
Dolgopyat, D., Wright, P.: The diffusion coefficient for piecewise expanding
  maps of the interval with metastable states.
\newblock Stoch. Dyn. \textbf{12}(1), 1150,005, 13 (2012).
\newblock \doi{10.1142/S0219493712003547}.
\newblock \urlprefix\url{http://dx.doi.org/10.1142/S0219493712003547}

\bibitem{F2001}
Froyland, G.: Extracting dynamical behavior via {M}arkov models.
\newblock In: Nonlinear dynamics and statistics ({C}ambridge, 1998), pp.
  281--321. Birkh\"auser Boston, Boston, MA (2001)

\bibitem{FD03}
Froyland, G., Dellnitz, M.: Detecting and locating near-optimal
  almost-invariant sets and cycles.
\newblock SIAM J. Sci. Comput. \textbf{24}(6), 1839--1863 (electronic) (2003).
\newblock \doi{10.1137/S106482750238911X}.
\newblock \urlprefix\url{http://dx.doi.org/10.1137/S106482750238911X}

\bibitem{FMS11}
Froyland, G., Murray, R., Stancevic, O.: Spectral degeneracy and escape
  dynamics for intermittent maps with a hole.
\newblock Nonlinearity \textbf{24}(9), 2435--2463 (2011).
\newblock \doi{10.1088/0951-7715/24/9/003}.
\newblock \urlprefix\url{http://dx.doi.org/10.1088/0951-7715/24/9/003}

\bibitem{FMT07}
Froyland, G., Murray, R., Terhesiu, D.: Efficient computation of topological
  entropy, pressure, conformal measures, and equilibrium states in one
  dimension.
\newblock Phys. Rev. E \textbf{76}(3), 036702 (2007).
\newblock \doi{10.1103/PhysRevE.76.036702}

\bibitem{GTHW11}
Gonz{\'a}lez-Tokman, C., Hunt, B.R., Wright, P.: Approximating invariant
  densities of metastable systems.
\newblock Ergodic Theory Dynam. Systems \textbf{31}(5), 1345--1361 (2011).
\newblock \doi{10.1017/S0143385710000337}.
\newblock \urlprefix\url{http://dx.doi.org/10.1017/S0143385710000337}

\bibitem{HY02}
Homburg, A.J., Young, T.: Intermittency in families of unimodal maps.
\newblock Ergodic Theory Dynam. Systems \textbf{22}(1), 203--225 (2002).
\newblock \doi{10.1017/S0143385702000093}.
\newblock \urlprefix\url{http://dx.doi.org/10.1017/S0143385702000093}

\bibitem{KL09}
Keller, G., Liverani, C.: Rare events, escape rates and quasistationarity: some
  exact formulae.
\newblock J. Stat. Phys. \textbf{135}(3), 519--534 (2009).
\newblock \doi{10.1007/s10955-009-9747-8}.
\newblock \urlprefix\url{http://dx.doi.org/10.1007/s10955-009-9747-8}

\bibitem{LiM03}
Liverani, C., Maume-Deschamps, V.: Lasota-{Y}orke maps with holes:
  conditionally invariant probability measures and invariant probability
  measures on the survivor set.
\newblock Ann. Inst. H. Poincar\'e Probab. Statist. \textbf{39}, 385--412
  (2003)

\bibitem{PY79}
Pianigiani, G., Yorke, J.: Expanding maps on sets which are almost invariant:
  decay and chaos.
\newblock Trans. Amer. Math. Soc. \textbf{252}, 351--366 (1979)

\bibitem{R1}
Rockafellar, R.T.: Convex analysis.
\newblock Princeton University Press (1970)

\bibitem{ulam60}
Ulam, S.: A collection of mathematical problems.
\newblock Interscience Publ. (1960)

\end{thebibliography}

\end{document}